\renewcommand\@seccntformat[1]{}
\patchcmd{\thebibliography}{\leftmargin\labelwidth}{\leftmargin\labelwidth\addtolength\itemsep{-0.1\baselineskip}}{}{}
\author{Boris Bukh\thanks{Department of Mathematical Sciences, Carnegie Mellon University, Pittsburgh, PA 15213, USA\@. Supported in part by U.S.\ taxpayers through NSF grant DMS-2154063 and NSF CAREER grant DMS-1555149.}\and
R. Amzi Jeffs\thanks{Department of Mathematical Sciences, Carnegie Mellon University, Pittsburgh, PA 15213, USA\@. Supported by the National Science Foundation through Award No. 2103206.}}
\title{Planar convex codes are decidable}
\date{December 2022}
\newtheorem{theorem}{Theorem}
\newtheorem{lemma}[theorem]{Lemma}
\newtheorem{corollary}[theorem]{Corollary}
\newtheorem{proposition}[theorem]{Proposition}
\newtheorem{definition}[theorem]{Definition}
\theoremstyle{remark}
\newtheorem{remark}[theorem]{Remark}
\newcommand*{\eqdef}{\stackrel{\mbox{\normalfont\tiny def}}{=}}  
\DeclarePairedDelimiter\abs{\lvert}{\rvert}                      
\newcommand*{\R}{\mathbb{R}}                                     
\newcommand*{\C}{\mathcal{C}}                                     
\newcommand*{\X}{\mathcal{X}}                                     
\newcommand*{\Y}{\mathcal{Y}}                                     
\newcommand*{\U}{\mathcal{U}}                                     
\newcommand*{\ipat}[2]{\operatorname{patt}_{#1}(#2)}               
\newcommand*{\simpli}[2]{\operatorname{sim}_{#2}(#1)}               
\DeclareMathOperator{\cl}{cl}                                 
\DeclareMathOperator{\interior}{int}                                 
\DeclareMathOperator{\conv}{conv}                                 
\DeclareMathOperator{\code}{code}                               
\DeclareMathOperator{\cone}{cone}                               
\tikzset{
  convex set/.style={very thick},
  filled convex set/.style={very thick,fill=white!75!black}
}
\begin{document}
\maketitle
\begin{abstract}
We show that every convex code realizable by compact sets in the plane admits a realization consisting of polygons, and analogously every open convex code in the plane can be realized by interiors of polygons. 
We give factorial-type bounds on the number of vertices needed to form such realizations. 
Consequently we show that there is an algorithm to decide whether a convex code admits a closed or open realization in the plane. 
\end{abstract}

\section{Introduction}\label{sec:introduction}
\paragraph{Prelude.}
A basic way to represent relationships between several sets is to use \emph{Venn diagrams}.
In a Venn diagram, the sets are represented by planar regions, and elements are represented
by points in such a way that the containment relation is preserved. For example,  \hyperref[fig:venn]{Figure 1(a)}
depicts the usual Venn diagram representing three sets $A,B$ and $C$, with eight regions
containing points of the eight sets ${A\cap B\cap C},\overline{A}\cap B\cap C,\dotsc,\overline{A}\cap \overline{B}\cap \overline{C}$
respectively. 

If certain intersections are empty, we may use Venn diagrams omitting them, as in \hyperref[fig:venn]{Figure 1(b)}.
\begin{figure}[h]
\begin{center}
\begin{tikzpicture}
\begin{scope}
\draw[convex set] (0,0) circle [radius=1cm];
\draw[convex set] (1,0) circle [radius=1cm];
\draw[convex set] (0.5,0.866025) circle [radius=1cm];
\node at (-1.2,-0.3) {$A$};
\node at (2.2,-0.3) {$B$};
\node at (-0.5,1.6) {$C$};
\node at (0,-1.5) {\textbf{a)} Eight regions are present};
\end{scope}
\begin{scope}[xshift=6cm]
\begin{scope}[scale=1.6,xshift=-0.4cm,yshift=-0.2cm]
\draw[convex set] (0.5,0) circle [x radius=1cm, y radius=0.2cm];
\draw[convex set] (0.25,0.433012) circle [x radius=1cm, y radius=0.2cm, rotate=60];
\draw[convex set] (0.75,0.433012) circle [x radius=1cm, y radius=0.2cm, rotate=-60];
\end{scope}
\node at (-0.9,0.4) {$A$};
\node at (1.26,0.4) {$B$};
\node at (0.1,-0.9) {$C$};
\node at (0,-1.5) {\textbf{b)} Region $A\cap B\cap C$ is omitted};
\end{scope};
\end{tikzpicture}
\end{center}
\vspace{-1.8em}
\caption{Three-set Venn diagrams.}\label{fig:venn}
\vspace{-1em}
\end{figure}

This paper is about Venn diagrams realizable by convex regions. These 
are also known as \emph{convex codes}, which is the terminology that we adopt.
Convex codes have been studied extensively over the last decade, with most work following a 2013 paper of Curto, Itskov, Veliz-Cuba and Youngs~\cite{CIVY}. 
The theory of convex codes can be viewed as a strengthening of the theory of $d$-representable complexes, which are the simplicial complexes that record nonempty intersections of a collection of convex sets in $\R^d$. 
Convex codes record not only the nonempty intersections, but also how other sets cover these intersections.

It is well known that there is an algorithm to determine whether or not a given simplicial complex is $d$-representable~\cite[Section 4.1]{tancer_survey}, but the analogous problem for convex codes has remained open. Chen, Frick and Shiu~\cite{CFS} showed that recognizing codes that can be realized by ``good covers" is undecidable, and asked about recognizing convex codes realizable by bounded open sets.  
Our main result is an affirmative answer to this question in the case $d=2$ (\Cref{thm:plane} below). 
Though our focus is on convex sets in the plane, we work in general $\R^d$ whenever
possible.

\paragraph{Convex codes.} Let $X_1,\dotsc,X_n$ be $n$ bounded convex sets in $\R^d$.
The \emph{intersection pattern} of the tuple $\X=(X_1,\dotsc,X_n)$ at the point $p\in \R^d$
is $\ipat{\X}{p}\eqdef \{i\in [n] \mid p\in X_i\}$. The \emph{convex code} of $\X$ is then defined
as $\code(\X)\eqdef \{\ipat{\X}{p} \mid p\in \R^d\}$. If $\C=\code(\X)$, we say that
$\C$ is a convex code \emph{realized} by~$\X$, and that $\X$ is a $d$-realization of~$\C$.
Franke and Muthiah showed that every $\C\subseteq 2^{[n]}$ containing $\emptyset$ is $d$-realizable for some $d$~\cite[Theorem~1]{all_codes_convex}.

We shall say that a realization $\X=(X_1,\dotsc,X_n)$ of a convex code $\C$ is \emph{closed} if every 
set $X_i$ is closed (which implies that it is compact). We similarly define an \emph{open} realization. If we consider only the realizations
in $\R^d$ for a fixed~$d\ge 2$, these are different concepts: Some convex codes admit only open
$d$\nobreakdash-realizations, some admit only closed $d$\nobreakdash-realizations, and some admit neither;
the first example showing the difference between open and closed $d$\nobreakdash-realizations is due to Lienkaemper, Shiu and Woodstock~\cite{LSW}, and
\cite{CGIK,jeffs_phenomena, jeffs_all_vectors} contain more recent results.

\paragraph{Our results.}
The primary motivation for this work is to find a way to tell which convex codes are $d$\nobreakdash-realizable.
To that end, it is natural to wonder whether every convex code admits a not-too-complicated realization.
We give a positive answer for $d=2$, namely that every closed convex code in the plane can be realized by polygons (some of which might degenerate into line segments or points), and that the number of vertices among these polygons is bounded by a computable function. 
We obtain the analogous result for open realizations in the plane as well.
As an immediate consequence, we conclude that there is an algorithm for determining if a convex code has a closed or open $2$\nobreakdash-realization.

\begin{theorem}\label{thm:plane}
Let $\C\subseteq 2^{[n]}$ be a convex code.
\begin{enumerate}
\item If $\C$ admits a closed convex realization in $\R^2$, then it admits a realization consisting of polygons with at most $6^n n!$ total vertices among them, and
\item If $\C$ admits an open convex realization in $\R^2$, then it admits a realization consisting of interiors of polygons with at most  $4\cdot 6^n (n+1)!$ total vertices among them. 
\end{enumerate}
As a consequence, there is an algorithm to decide whether or not $\C$ admits a closed (respectively, open) convex realization in the plane. 
\end{theorem}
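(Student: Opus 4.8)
The plan is to convert an arbitrary convex realization into a polygonal one, bound the number of vertices by an induction that peels off one set at a time, and then deduce decidability from the Tarski--Seidenberg theorem. Concretely, suppose $\C$ has a closed convex realization $\X=(X_1,\dots,X_n)$ in $\R^2$. For each codeword $c\in\C$ pick a witness point $p_c$ with $\ipat{\X}{p_c}=c$, and for each $i\in[n]$ set $Y_i\eqdef\conv\{\,p_c : c\in\C,\ i\in c\,\}$, a possibly degenerate polygon on at most $|\C|\le 2^n$ vertices. Each generator $p_c$ of $Y_i$ lies in the convex set $X_i$, so $Y_i\subseteq X_i$; hence $p_c\in Y_i$ precisely when $i\in c$, which gives $\ipat{\Y}{p_c}=c$ and $\C\subseteq\code(\Y)$, while conversely every pattern of $\Y$ is a subset of some pattern of $\X$. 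Thus $\code(\Y)$ lies between $\C$ and its downward closure, and the remaining work is to modify $\Y$ --- enlarging some polygons and adding a controlled number of new vertices --- so as to destroy every spurious codeword in $\code(\Y)\setminus\C$ (each a proper subset of a genuine one) without destroying any genuine codeword or any genuine low-dimensional region, all within a budget of $6^n n!$ total vertices.

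The vertex bound in part~(i) I would obtain by induction on $n$, peeling off $X_n$; this inductive step is the main obstacle. The codewords of $\C$ split into those containing $n$ and those not: deleting $n$ from the former yields a code on $n-1$ ground elements that is realized by the plane sets $X_i\cap X_n$, and the latter are governed by $X_1,\dots,X_{n-1}$ restricted to the complement of $X_n$, again a code on $n-1$ elements. The inductive hypothesis furnishes polygonal realizations of both with at most $6^{n-1}(n-1)!$ vertices, and the crux is to reassemble them into a realization of $\C$: place the ``inside'' realization within a convex polygon, arrange the ``outside'' realization around it, and choose the boundary of the $X_n$-polygon so that it interfaces correctly with both --- in particular so that no spurious codeword is created along the newly inserted edges and every degenerate region is preserved. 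I expect this reassembly, which is precisely where planarity is used, to cost only a factor of at most $6n$ in the total vertex count, giving the recursion $V(n)\le 6n\cdot V(n-1)$ with $V(0)=1$, hence $V(n)\le 6^n n!$. Making the interface argument watertight, and in particular controlling exactly how many new edges and vertices are forced, is the principal difficulty.

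For the open realizations of part~(ii) I would reduce to the closed case by adjoining an auxiliary $(n+1)$st set --- for instance a large closed polygon, or a suitable complementary region --- so that a closed realization of a related code on $n+1$ ground elements encodes an open polygonal realization of $\C$; feeding this into part~(i) with $n+1$ in place of $n$ and bookkeeping the passage between open and closed realizations yields the bound $4\cdot 6^n(n+1)!$. Alternatively, one reruns the construction above verbatim with open convex sets and interiors of polygons, the extra factor of $4(n+1)$ arising for the same bookkeeping reason.

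Finally, decidability follows from the vertex bounds together with quantifier elimination over the reals. Fix $\C\subseteq 2^{[n]}$ and set $N=6^n n!$. By part~(i), $\C$ has a closed convex realization in $\R^2$ if and only if there exist points $v_1,\dots,v_N\in\R^2$ and an assignment of each to one of $n$ groups such that, writing $X_i$ for the convex hull of the $i$th group, $\code(X_1,\dots,X_n)=\C$. The relation ``$q\in X_i$'' is expressed by $\exists\,\lambda_j\ge 0$ with $\sum_j\lambda_j=1$ and $q=\sum_j\lambda_j v_j$; ``$q$ has pattern $c$'' is a Boolean combination of such relations; and ``$\code(X_1,\dots,X_n)=\C$'' is the conjunction, over $c\in\C$, of ``there exists a point of pattern $c$'', and over the finitely many $\sigma\notin\C$, of ``every point fails to have pattern $\sigma$''. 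Altogether this is a sentence in the first-order theory of the real field, which is decidable by Tarski--Seidenberg; thus closed $2$-realizability of $\C$ is decidable. Replacing $N$ by $4\cdot 6^n(n+1)!$ and convex hulls by their interiors, part~(ii) gives the same conclusion for open $2$-realizability.
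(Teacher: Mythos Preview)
Your proposal contains two genuine gaps, each of which hides essentially the entire difficulty of the theorem.

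\textbf{First gap: ``destroying spurious codewords''.} Your construction $Y_i=\conv\{p_c : i\in c\}$ correctly gives $\C\subseteq \code(\Y)$ and $\code(\Y)$ contained in the downward closure of~$\C$, but you then say only that ``the remaining work is to modify $\Y$ \dots\ so as to destroy every spurious codeword''. This is not remaining work; it is the whole problem. Enlarging some $Y_i$ to kill a spurious subset pattern will in general create new spurious patterns elsewhere, and there is no mechanism offered to control this. Nothing in your write-up explains why this process terminates, let alone within a vertex budget.

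\textbf{Second gap: the peel-off induction.} Splitting $\C$ into the codewords containing $n$ and those not containing $n$ does not yield two smaller instances of the same problem. The ``inside'' part $\{c\setminus\{n\}:n\in c\}$ is indeed realized by the convex sets $X_i\cap X_n$, but the ``outside'' part $\{c\in\C:n\notin c\}$ is the code of $(X_1,\dots,X_{n-1})$ \emph{restricted to the nonconvex region $\R^2\setminus X_n$}; you have given no reason why this code is itself closed-$2$-realizable, and even if it were, no bound on the vertices of such a realization would follow inductively. The subsequent ``reassembly'' step --- gluing two independently-produced planar realizations along the boundary of a polygon so that no spurious codeword appears --- is asserted to cost at most a factor $6n$, but no argument is given, and this interface problem is highly nontrivial (indeed it is where all the planar geometry lives).

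For comparison, the paper proceeds quite differently. It first uses Zorn's lemma to pass to an \emph{inclusion-minimal} closed realization relative to a fixed finite set of representatives, then shows by a local ``simplification'' argument (replacing a small disk around a boundary point by a cone) that any such minimal realization must already be polygonal, and finally bounds the number of vertices by a pigeonhole argument: if some polygon had too many vertices, one could find a vertex that is ``good'' (not on any other polygon's edge) and pull it inward, contradicting minimality. The open case is handled by taking closures, choosing representatives that pin down enough interior structure (triangles witnessing each codeword and quadrilaterals straddling each degenerate intersection line), applying the closed result, and then passing back to interiors. Your decidability paragraph via Tarski--Seidenberg is fine and matches the paper's, but it rests on the vertex bound, which your outline does not establish.
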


Our proof consists of three steps. First, starting with an arbitrary realization, we shrink its sets as much as possible. Second, we show that the resulting inclusion-minimal realizations
are polygonal. Third, we bound the total number of vertices.
The first step works for any $d$, but our methods for the last two are specific to the plane. 
The situation for $d\ge 3$ remains unclear --- we do not know if convex codes in higher dimensions admit polytopal realizations, and we do not know if there exists
a computable bound on the number of vertices in inclusion-minimal polytopal realizations.
In particular, it is possible that there is no algorithm for deciding whether an arbitrary convex code is $d$\nobreakdash-realizable for fixed $d\ge 3$. 


\paragraph{Sharpness.} We do not know if the factorial-type bound in \Cref{thm:plane} is necessary. However, the number
of vertices must grow with the size of the code.
\begin{theorem}\label{thm:manyvertices}
Let $\X$ be a realization of $\C\subseteq 2^{[n]}$ consisting of closed polygons.
Then the total number of vertices among the polygons in $\X$ is at least $\abs{\C}/8n$.
\end{theorem}
In particular, since the code $2^{[n]}$ admits a realization by closed polygons \cite[Lemma~2.5]{local_obstructions}, it follows that
the $6^nn!$ in \Cref{thm:plane} cannot be improved to anything less than $\frac{1}{n}2^{n-3}$. 

\section{Step 1: Existence of inclusion-minimal realizations}\label{sec:inclusion-minimal}
Let $\X = (X_1, \ldots, X_n)$ be a tuple of compact convex sets. Recall that $\code(\X)\eqdef \{\ipat{\X}{p} \mid p\in \R^d\}$.
We say that $P\subseteq \R^d$ is a \emph{set of representatives} for $\X$ if $\code(\X)=\{\ipat{\X}{p} \mid p\in P\}$.
Obviously, every $\X$ admits a set of representatives of size $\abs{\code(\X)}\leq 2^n$ by picking a point in each region.

\begin{lemma}\label{lem:inclusion-minimal}
Let $\X = (X_1, \ldots, X_n)$ be a tuple of compact convex sets in~$\R^d$, and $P$ is a set of representatives for~$\X$.
Then there exists a tuple $\Y = (Y_1,\ldots, Y_n)$ of compact convex sets with the following properties:
\begin{enumerate}
\item \label{im:i} $Y_i\subseteq X_i$ for all $i\in[n]$,
\item \label{im:ii} $Y_i\cap P = X_i\cap P$ for all $i\in[n]$, 
\item \label{im:iii} $\code(\Y) = \code(\X)$, and
\item \label{im:iv} The tuple $\Y$ is inclusion-minimal among all tuples satisfying \ref{im:i}, \ref{im:ii} and \ref{im:iii}. 
\end{enumerate}
\end{lemma}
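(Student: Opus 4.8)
The plan is to obtain $\Y$ as a limit of a minimizing sequence, using compactness of the relevant space of tuples to ensure the minimum is attained. First I would set up the space of ``candidate'' tuples: let $\mathcal{K}$ be the collection of all tuples $\mathcal{Z} = (Z_1,\dots,Z_n)$ of compact convex sets satisfying \ref{im:i} and \ref{im:ii}, i.e.\ $X_i\cap P \subseteq Z_i \subseteq X_i$ for each $i$. This collection is nonempty (it contains $\X$ itself) and, crucially, each $Z_i$ lives inside the fixed compact set $X_i$, so by the Blaschke selection theorem the space of compact convex subsets of $X_i$ is compact in the Hausdorff metric. Hence $\mathcal{K}$, as a subset of a finite product of such spaces, is contained in a compact metric space; I would check that $\mathcal{K}$ is closed there (Hausdorff limits of compact convex sets are compact convex, and the conditions $X_i\cap P\subseteq Z_i\subseteq X_i$ are preserved under Hausdorff limits since $X_i$ is closed and $X_i\cap P$ is finite), so $\mathcal{K}$ is itself compact.

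Next I would introduce a real-valued functional to minimize that detects inclusion-minimality, for instance $\Phi(\mathcal{Z}) = \sum_{i=1}^n \operatorname{vol}_i(Z_i)$ where $\operatorname{vol}_i$ denotes an appropriately chosen volume — say, the $k_i$-dimensional volume of $Z_i$ within its affine hull, or more robustly a sum over dimensions so that lower-dimensional shrinking is also rewarded (one can take $\Phi(\mathcal{Z}) = \sum_i \sum_{k=0}^d \lambda^{d-k}\,\mathrm{vol}_k(Z_i)$ for a small parameter $\lambda>0$, using the intrinsic volumes, so that $\Phi$ is strictly monotone under proper inclusion). The key facts I need are that $\Phi$ is lower semicontinuous with respect to Hausdorff convergence and strictly monotone: $Z_i' \subsetneq Z_i$ implies $\Phi(\mathcal{Z}') < \Phi(\mathcal{Z})$ when the other coordinates agree. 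Lower semicontinuity of volume (and intrinsic volumes) under Hausdorff limits of convex bodies is standard.

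The main subtlety — and the step I expect to be the real obstacle — is handling condition \ref{im:iii}, that $\code(\Y) = \code(\X)$. This is \emph{not} obviously closed under Hausdorff limits: shrinking sets can only lose codewords, never gain them, so along a minimizing sequence the codes could a priori strictly shrink. I would restrict the minimization to the closed subcollection $\mathcal{K}' = \{\mathcal{Z}\in\mathcal{K} : \code(\mathcal{Z}) = \code(\X)\}$ and argue this is still closed in $\mathcal{K}$, hence compact. The point is that $\code(\mathcal{Z})\supseteq \code(\X)$ is automatic for $\mathcal{Z}\in\mathcal{K}$ because condition \ref{im:ii} forces $\ipat{\mathcal{Z}}{p} = \ipat{\X}{p}$ for every $p\in P$, and $P$ is a set of representatives for $\X$; so in fact \emph{every} tuple in $\mathcal{K}$ already satisfies $\code(\mathcal{Z})\supseteq\code(\X)$, and since shrinking can only remove codewords we also have $\code(\mathcal{Z})\subseteq\code(\X)$ — wait, that last inclusion requires care, so let me instead argue directly: $\mathcal{Z}\in\mathcal{K}$ satisfies $Z_i\subseteq X_i$, so $\ipat{\mathcal{Z}}{p}\subseteq\ipat{\X}{p}$ for all $p$, whence $\code(\mathcal{Z})\subseteq\code(\X)$; combined with the reverse inclusion from \ref{im:ii}, we get $\code(\mathcal{Z}) = \code(\X)$ for \emph{all} $\mathcal{Z}\in\mathcal{K}$. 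Thus $\mathcal{K}' = \mathcal{K}$ and condition \ref{im:iii} comes for free, which dissolves the obstacle entirely. Finally, a minimizer $\Y$ of the lower semicontinuous functional $\Phi$ over the nonempty compact set $\mathcal{K}$ exists; by strict monotonicity of $\Phi$, no tuple in $\mathcal{K}$ is properly contained coordinatewise in $\Y$, which is exactly condition \ref{im:iv}. I would close by noting that the reduction showing $\mathcal{K}'=\mathcal{K}$ also means \ref{im:iii} need not be carried through the compactness argument at all, simplifying the write-up.
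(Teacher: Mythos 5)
Your argument contains a fatal error at exactly the point you identified as ``the real obstacle'' and then claimed to dissolve. It is true that condition \ref{im:ii} forces $\ipat{\mathcal{Z}}{p}=\ipat{\X}{p}$ for every $p\in P$, hence $\code(\X)\subseteq\code(\mathcal{Z})$ for every tuple in your collection $\mathcal{K}$. But the step ``$Z_i\subseteq X_i$, so $\ipat{\mathcal{Z}}{q}\subseteq\ipat{\X}{q}$ for all $q$, whence $\code(\mathcal{Z})\subseteq\code(\X)$'' is a non sequitur: a subset of a codeword of $\code(\X)$ need not itself be a codeword of $\code(\X)$. Shrinking sets can \emph{create} codewords. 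For instance, let $X_1=X_2$ be the unit disk, with $P$ consisting of the center and one exterior point, so $\code(\X)=\{\emptyset,\{1,2\}\}$; the tuple $\mathcal{Z}=(X_1,\,X_2\cap H)$ for a closed half-plane $H$ through the center lies in $\mathcal{K}$, yet points of $X_1\setminus H$ have pattern $\{1\}\notin\code(\X)$. So $\mathcal{K}'\subsetneq\mathcal{K}$ in general, condition \ref{im:iii} is a genuine constraint (indeed it is the entire content of the lemma --- it is why the later steps of the paper must choose their shrinking balls to avoid boundaries and representatives), and your conclusion that \ref{im:iii} ``comes for free'' is false. Your earlier parenthetical that shrinking can only lose codewords is also false, in both directions: codewords can appear and disappear.

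Once this is retracted, the Blaschke-compactness-plus-volume-minimization scheme has an unfilled hole: you must show that $\mathcal{K}'=\{\mathcal{Z}\in\mathcal{K}:\code(\mathcal{Z})=\code(\X)\}$ is closed under Hausdorff limits, and this is not clear --- a non-monotone convergent sequence with constant code can acquire an extra codeword in the limit (two disjoint segments drifting together realize $\{1,2\}$ only at the limit), and nothing in your setup rules such sequences out of a minimizing sequence for $\Phi$. The auxiliary machinery (Blaschke selection, intrinsic volumes with strict monotonicity and semicontinuity) is fine but is doing none of the real work. The paper avoids the closedness issue entirely by applying Zorn's lemma to the poset of tuples satisfying \ref{im:i}--\ref{im:iii} ordered by reverse inclusion: for a chain, the coordinatewise intersection preserves \ref{im:i} and \ref{im:ii} trivially, $\code(\X)\subseteq\code(\Y)$ follows from $P$ as above, and for the reverse inclusion one uses the chain structure --- for any $q$ and each of the finitely many indices $i$ dropped from $\ipat{\Y}{q}$ there is a chain element already dropping it, and the smallest of these finitely many elements has exactly the pattern $\ipat{\Y}{q}$ at $q$, so no new codeword arises. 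That monotonicity argument is precisely what your non-monotone minimizing sequence lacks; if you want to salvage your route, you would need either to prove closedness of $\mathcal{K}'$ or to rephrase the volume-minimization along chains, at which point you are back to the paper's Zorn argument.
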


\begin{proof}
Let $\mathfrak{R}$ be the set consisting of tuples that satisfy conditions \ref{im:i}, \ref{im:ii} and \ref{im:iii}.
Introduce a partial order on $\mathfrak{R}$ by declaring $\Y\le \Y'$ if and only if $Y_i'\subseteq Y_i$ for all $i\in[n]$.
The poset $\mathfrak{R}$ is nonempty as it contains~$\X$. The condition \ref{im:iv} is equivalent to the tuple $\Y$ being a maximal element in this poset.
We shall find it using Zorn's lemma.

Given a chain $\mathfrak{C}$ of tuples in $\mathfrak{R}$, define a tuple $\Y = (Y_1, \ldots, Y_n)$ by 
$Y_i = \bigcap_{\Y'\in \mathfrak{C}} Y_i'$.
Each $Y_i$ is an intersection of compact convex sets, and so is itself compact and convex.
Moreover, each $Y_i$ satisfies \ref{im:i} and \ref{im:ii} since these properties are preserved by intersections. 
It remains to argue that $\Y$ satisfies condition \ref{im:iii}. 

Since $\ipat{\Y}{p}=\ipat{\X}{p}$ for every $p\in P$ and $P$ is a set of representatives for $\X$, it follows that
$\code(\X)\subseteq \code(\Y)$. Conversely, let $q$ be any point in $\R^d$. 
For every $i\notin \ipat{\Y}{q}$, there exists some $\Y^{(i)}\in \mathfrak{C}$ such that $q\notin Y^{(i)}_i$. For each $i\in [n]$,
fix such a $\Y^{(i)}$.
Let $\Y'$ be the smallest (with respect to inclusion) among the tuples~$\Y^{(1)},\dotsc,\Y^{(n)}$. Then $i\notin \ipat{\Y'}{q}$ whenever $i\notin \ipat{\Y}{q}$.
Since $Y_i\subseteq Y_i'$, we also have $i\in \ipat{\Y'}{q}$ whenever $i\in \ipat{\Y}{q}$. Therefore,
$\ipat{\Y}{q}=\ipat{\Y'}{q}$. Since $\Y'\in\mathfrak{C}$, this implies that $\ipat{\Y}{q}\in \code(\X)$.
Since $q\in \R^d$ is arbitrary, $\code(\Y) \subseteq \code(\X)$ and hence $\code(\Y)=\code(\X)$.
We conclude that every chain $\mathfrak{C}$ in $\mathfrak{R}$ has an upper bound, so $\mathfrak{R}$ has a maximal element, and the result follows. 
\end{proof}

\section{Step 2: Inclusion-minimal realizations in the plane are polygonal}\label{sec:polytopal}

\begin{lemma}\label{lem:local-arc}
Let $C\subseteq \R^2$ be a compact convex set with non-empty interior, and let $p$ be a boundary point of $C$.
Then there exists $\varepsilon >0$ so that every circle $S$ of positive radius $\delta < \varepsilon$ centered at $p$ 
intersects $C$ in a connected arc.
\end{lemma}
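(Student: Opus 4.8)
The plan is to localize the problem near $p$ using the tangent cone, reduce it to showing that a certain set of angles is an interval, and verify that by a short convexity estimate on a line segment. Let $W=\cl(\cone(C-p))$ be the tangent cone of $C$ at $p$. Because $C$ is compact convex with nonempty interior and $p\in\partial C$, this $W$ is a closed convex ``wedge'': writing $u_\theta=(\cos\theta,\sin\theta)$, one has $W=\{tu_\theta:t\ge 0,\ \theta\in[\beta_1,\beta_2]\}$ for an angular interval with $0<\beta_2-\beta_1\le\pi$ (the bound $\pi$ is exactly convexity of $W$, and $\interior W\ne\emptyset$ since $\interior C\ne\emptyset$). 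Put $T_\delta=\{\theta:p+\delta u_\theta\in C\}$. Since $C\subseteq p+W$, we get $T_\delta\subseteq[\beta_1,\beta_2]$ for every $\delta>0$, so $S_\delta\cap C=\{p+\delta u_\theta:\theta\in T_\delta\}$ is a connected arc as soon as $T_\delta$ is a subinterval of $[\beta_1,\beta_2]$; I would show this for all sufficiently small $\delta$.

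Two facts drive the argument. (i) For each $\theta\in(\beta_1,\beta_2)$ the open ray $\{p+tu_\theta:t>0\}$ meets $\interior C$: otherwise $C-p$ would avoid all directions near $\theta$ on one side of it, forcing $W$ not to reach $\beta_1$ or $\beta_2$; concretely, picking $x_1,x_2\in C$ whose directions from $p$ lie strictly on the two sides of $\theta$, the point $p+tu_\theta$ lies in the interior of the (nondegenerate) triangle $\conv\{p,x_1,x_2\}\subseteq C$ for all small $t>0$. (ii) Fix $\eta\in(0,(\beta_2-\beta_1)/2)$; a compactness argument upgrades (i) to a uniform radius: there is $r>0$ with $p+tu_\theta\in C$ for all $\theta\in[\beta_1+\eta,\beta_2-\eta]$ and all $t\in[0,r]$. (Use that $\{(\theta,t):p+tu_\theta\in\interior C\}$ is open, that since $p\in C$ the segment from $p$ to an interior point of $C$ lies in $\interior C$ apart from the endpoint $p$, then pass to a finite subcover and let $r$ be the least of the resulting radii.) In particular $[\beta_1+\eta,\beta_2-\eta]\subseteq T_\delta$ whenever $\delta\le r$.

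Now set $\varepsilon:=r\cos\eta>0$ and fix $0<\delta<\varepsilon$. As $[\beta_1+\eta,\beta_2-\eta]\subseteq T_\delta\subseteq[\beta_1,\beta_2]$, the only way $T_\delta$ can fail to be an interval is to have a gap inside $[\beta_1,\beta_1+\eta]$ or inside $[\beta_2-\eta,\beta_2]$; I treat the first, the second being symmetric. Suppose $\theta',\theta''\in T_\delta$ with $\beta_1\le\theta'<\theta''\le\beta_1+\eta$, and let $\theta'<\theta_2<\theta''$. Both $p+\delta u_{\theta'}$ and $E:=p+ru_{\beta_1+\eta}$ lie in $C$, the latter by (ii), and as one traverses the segment $[p+\delta u_{\theta'},E]$ the direction seen from $p$ increases monotonically from $\theta'$ to $\beta_1+\eta$, so it equals $\theta_2$ at a unique point $q=p+\rho^*u_{\theta_2}$. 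The two endpoints of this segment subtend at $p$ the angle $\gamma:=(\beta_1+\eta)-\theta'\le\eta$ and lie at distances $\delta$ and $r$ from $p$; since $\delta<r\cos\eta\le r\cos\gamma$, the squared distance from $p$ to a point of the segment is a convex function of the parameter with nonnegative derivative at the $\delta$-endpoint, hence nondecreasing, so $\rho^*\ge\delta$. Therefore $p+\delta u_{\theta_2}$ lies on $[p,q]\subseteq\conv\{p,\,p+\delta u_{\theta'},\,E\}\subseteq C$, i.e.\ $\theta_2\in T_\delta$. This excludes every gap, so $T_\delta$ is an interval and $S_\delta\cap C$ is a connected arc.

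The main difficulty, I expect, is the point behind (ii): the radial function $\theta\mapsto\max\{t:p+tu_\theta\in C\}$ need not be quasi-concave — for a triangle with a vertex at $p$ it dips in the interior — so one cannot join two points of $S_\delta\cap C$ through a single far-away interior point of $C$; what saves the argument is that this function is bounded below on the sub-wedge $[\beta_1+\eta,\beta_2-\eta]$, while near the extreme directions $\beta_1,\beta_2$ the segment estimate of the previous paragraph does the joining precisely because the subtended angle $\gamma$ is then at most $\eta$. Upgrading (i) to the uniform radius of (ii) and verifying the ``distance increases along the segment'' inequality are the only genuine computations.
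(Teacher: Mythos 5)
Your proof is correct, and it shares the paper's broad strategy: parametrize the small circle by directions from $p$, note that the admissible directions form an arc of angular width at most $\pi$, secure a central range of directions, and then patch the two angular end zones by joining points of $S_\delta\cap C$ to points of $C$ chosen near the extreme directions, with the circle small enough that the joining segments avoid the open disc. The implementations of the two key steps differ, though. For the central range, the paper just takes two points $q_1,q_2\in C$ within $\pi/4$ of the ends of the direction arc and uses the triangle $pq_1q_2$ together with a \textsc{disjointness} condition on the ball, while you prove a uniform inner radius $r$ on the compact sub-wedge $[\beta_1+\eta,\beta_2-\eta]$ via a compactness argument. For the end zones, the paper imposes a \textsc{tangency} condition and uses a right-angle argument to show that the segment from $q_1$ to an outlying point $q'$ of $S\cap C$ meets the circle only at $q'$, then radially projects that segment onto the circle; you instead show directly that the squared distance to $p$ is nondecreasing along the segment from $p+\delta u_{\theta'}$ to $E=p+ru_{\beta_1+\eta}$, using $\delta<r\cos\eta\le r\cos\gamma$, so every intermediate direction is hit at distance at least $\delta$ and the point at distance exactly $\delta$ lies in $\conv\{p,p+\delta u_{\theta'},E\}\subseteq C$. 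In short, you trade the paper's synthetic tangent-line geometry for a compactness step plus a one-line convexity computation; both are elementary and of comparable length. Two cosmetic points, neither a real gap: the bound $\beta_2-\beta_1\le\pi$ comes from the supporting line at the boundary point $p$ (convexity alone would permit $W=\R^2$), and in your step (i) the two auxiliary directions should be chosen within angle strictly less than $\pi$ of each other (possible by density) so that the triangle $\conv\{p,x_1,x_2\}$ is genuinely nondegenerate.
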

\begin{proof}
Consider the set 
\[
C_p \eqdef \{v\in \mathbb S^1\mid p+t v\in C \text{ for some } t >0\}.
\]
The set $C_p$ consists of the unit vectors which point ``into" $C$ from the boundary point $p$.
Since $C$ has non-empty interior, $C_p$ is a connected arc (not necessarily closed or open) that contains more than one point.
Up to affine transformation, we may assume that $p = (0,0)$, and that $C_p$ is an arc in the upper half of the unit circle.
Since $C_p$ might not be closed, it might not contain its endpoints. So, instead we select points
$q_1$ and $q_2$ in $C_p$ that are within angle $\pi/4$ of the left and right endpoints of~$C_p$, respectively.
For convenience, we may also choose $q_1$ and $q_2$ to have positive $y$-coordinates.
Finally, by scaling $C$ up, we may assume that both $q_1$ and $q_2$ lie in~$C$.

Let $S$ be a circle of radius $\varepsilon>0$ centered at~$p$.
The $\varepsilon$ is chosen small enough to satisfy two conditions, which we
call \textsc{disjointness} and \textsc{tangency}.
\begin{center}
\textbf{Disjointness: }The ball $\conv(S)$ is disjoint from the line segment~$q_1q_2$.
\end{center}
Consider the leftmost tangent from $q_1$ to $S$, and let $t_1=t_1(S)$ be the tangency point.
Define the point $t_2=t_2(S)$ symmetrically using the tangent from~$q_2$.
\begin{center}
\textbf{Tangency: }The angles $\angle q_1pt_1$ and $\angle q_2pt_2$ are both greater than $\pi/4$.
\end{center}
This situation is shown in \Cref{fig:local-arc}. 

\begin{figure}[h]
\[
\includegraphics{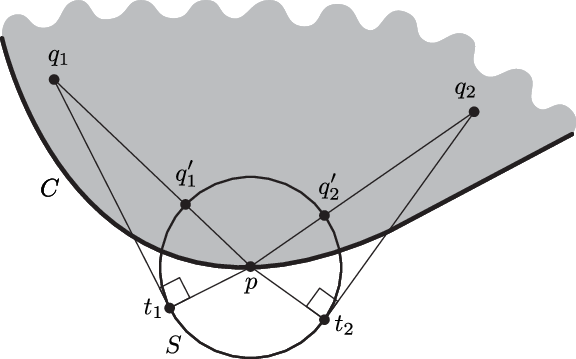}
\]
\caption{The small circle chosen in the proof of \Cref{lem:local-arc}, and the relevant tangent segments.}\label{fig:local-arc}
\end{figure}

Note that any smaller circle centered at $p$ also satisfies \textsc{disjointness} and \textsc{tangency}. We claim that any such circle intersects $C$ in a connected arc.
Without loss of generality, we prove this for~$S$.

Let $q_1'$ (respectively, $q_2'$) be the point where the line segment from $q_1$ (respectively, $q_2$) to $p$ crosses~$S$.
There are two arcs of $S$ connecting $q_1'$ and $q_2'$. One of the arcs is contained in the quadrilateral $\conv(q_1,q_2,q_1',q_2')$.
Since its vertices $q_1,q_2,q_1',q_2'$ lie in $C$, it follows that the arc is contained in $S\cap C$.
To see that $S\cap C$ is a connected arc, it will suffice to show that any $q'\in S\cap C$ is connected to $q_1'$ or $q_2'$ by an arc in $S\cap C$. 

Let $q'\in S\cap C$, and suppose that $q'$ does not lie between $q_1'$ and $q_2'$.
By the choice of $q_1$ and $q_2$, the point $q'$ is at the angle at most $\pi/4$ away from
either $q_1'$ or $q_2'$. Consider the first case (the other case being symmetric), i.e.,
$q'$ is to the left of $q_1'$. Since the angle $\angle q_1pq'$ is at most $\pi/4$, and the angle $\angle q_1pt_1$ exceeds
$\pi/4$, the point $q'$ lies in the cone bound by rays $\overrightarrow{pq_1}$ and $\overrightarrow{pt_1}$.
Since the line $q_1q'$ intersects $S$, it follows that $q'$ in fact lies inside the triangle $\bigtriangleup pq_1t_1$.
From this it follows that the triangle $\bigtriangleup pq_1q'$ is contained in $\bigtriangleup pq_1t_1$,
and so the angle $\angle pq'q_1$ is at least~$\angle pt_1q_1=\pi/2$. From this it follows
that the line segment $q_1q'$ intersects $S$ only in $q'$, and so its radial 
projection from $p$ onto $S$ is contained in $S\cap C$.
The projection is the desired arc from $q'$ to $q_1'$ in $S\cap C$.
\end{proof}

\begin{figure}[h]
\begin{center}
\begin{tikzpicture}
\def\bigangle{140}
\def\smallangle{400}
\def\distanceapart{2.6cm}

\def\drawsamestuff#1{%
\fill (0,1) circle (2pt);%
\node at (0,1.25) {$p$}; 
\draw let \p1=($(0,1)-(\bigangle:1cm)$),\n1={veclen(\x1,\y1)} in (0,1) circle [radius=\n1];
\node at (1,1.4) {$B$}; 
\node[anchor=west] at (1.0,0.3) {#1}; 
}
\begin{scope}[xshift=-\distanceapart];
\node at (0,-1.5) {\textbf{a)} Before}; 
\filldraw [filled convex set] (0,0) circle [radius=1cm]; 
\drawsamestuff{$C$}
\end{scope}
\begin{scope}[xshift=\distanceapart];
\node at (0,-1.5) {\textbf{b)} After}; 
\filldraw [filled convex set] (\bigangle:1cm) arc [radius=1cm, start angle=\bigangle, end angle=\smallangle] -- (0,1) -- (\bigangle:1cm); 
\drawsamestuff{$\simpli{C}{B}$}
\end{scope};
\end{tikzpicture}
\end{center}
\caption{Simplification of a convex set $C$ in the neighborhood of a point $p$.}
\end{figure}

For a set $A\subseteq \R^d$ and a point $p\in \R^d$, the \emph{cone over $A$ with apex $p$} is 
\[
  \cone_p(A)\eqdef \{\alpha p + (1-\alpha) x \mid 0\le \alpha \le 1 \text{ and } x\in A\}\cup\{p\}.
\]
(The reason for including $\{p\}$ in this definition is to ensure that $\cone_p(\emptyset)=\{p\}$.)

\begin{definition}\label{def:simplification}
Let $C\subseteq \R^2$ be a compact convex set, and let $p$ be a boundary point of $C$.
Let $B$ be a closed ball centered at $p$, and let $\partial B$ be the boundary of $B$. 
The \emph{simplification} of $C$ relative to $B$ is the set \[
\simpli{C}{B} \eqdef (C\setminus B) \cup \cone_p(\partial B\cap C). 
\]
\end{definition}

\begin{lemma}\label{lem:simplification}
  Let $B,C$ and $p$ be as in \Cref{def:simplification}. 
  If $\partial B\cap C$ is a connected arc, then $\simpli{C}{B}$ is the convex hull of $\{p\}\cup (C\setminus \interior B)$. 
\end{lemma}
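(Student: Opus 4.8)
We want to show $\simpli{C}{B} = \conv(\{p\} \cup (C \setminus \interior B))$, assuming $\partial B \cap C$ is a connected arc. Write $K \eqdef \conv(\{p\} \cup (C \setminus \interior B))$ for brevity. The plan is to prove the two inclusions $\simpli{C}{B} \subseteq K$ and $K \subseteq \simpli{C}{B}$ separately, with the second being the one requiring the arc hypothesis.

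**The easy inclusion $\simpli{C}{B} \subseteq K$.**

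By definition $\simpli{C}{B} = (C \setminus B) \cup \cone_p(\partial B \cap C)$. First, $C \setminus B \subseteq C \setminus \interior B \subseteq K$ trivially. Second, $\partial B \cap C \subseteq C \setminus \interior B \subseteq K$, and since $p \in K$ and $K$ is convex, $\cone_p(\partial B \cap C) \subseteq K$ as well (every point $\alpha p + (1-\alpha)x$ with $x \in \partial B \cap C$ is a convex combination of two points of $K$). So $\simpli{C}{B} \subseteq K$ — and this direction does not use connectedness of the arc.

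**The hard inclusion $K \subseteq \simpli{C}{B}$.**

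This is where the arc hypothesis does the work. It suffices to show that $\simpli{C}{B}$ is convex and contains $\{p\} \cup (C \setminus \interior B)$; it clearly contains $p$ (as $p \in \cone_p(\partial B \cap C)$) and contains $C \setminus \interior B$ once we check $C \cap \partial B \subseteq \simpli{C}{B}$, which holds since $\partial B \cap C \subseteq \cone_p(\partial B \cap C)$. So the whole problem reduces to: \emph{$\simpli{C}{B}$ is convex.} To see this, I would decompose $C$ itself as $C = (C \setminus \interior B) \cup (C \cap B)$ and analyze the portion $C \cap B$, which is a compact convex set containing $p$ on its boundary and sitting inside the ball $B$. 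The claim is that within $B$, replacing $C \cap B$ by $\cone_p(\partial B \cap C)$ does not destroy convexity. Concretely, I would argue: take two points $x, y \in \simpli{C}{B}$ and show the segment $xy$ lies in $\simpli{C}{B}$. Using the convexity of $C$, the segment $xy$ is "close to" being inside $C$; the only way it can leave $C$ is by passing through the region $B \setminus C$ near $p$. The connectedness of the arc $\partial B \cap C$ guarantees that $\cone_p(\partial B \cap C)$ is exactly the part of the cone $\cone_p(\partial B)$ (i.e., all of $B$, radially) that "should" be filled in — more precisely, for any ray from $p$ that meets $C$ outside $\interior B$, that ray's intersection with $\partial B$ lies in the arc, hence the whole segment from $p$ along that ray out to $\partial B$ lies in $\cone_p(\partial B \cap C) \subseteq \simpli{C}{B}$. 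This radial-convexity property, combined with convexity of $C \setminus \interior B$'s "shadow," is what forces $\simpli{C}{B}$ to be convex.

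**Main obstacle.**

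The delicate point is verifying convexity of $\simpli{C}{B}$ rigorously, specifically handling segments $xy$ that start in $C \setminus B$, dip into the ball $B$, and come back out. One clean way: show that every ray emanating from $p$ intersects $\simpli{C}{B}$ in a (possibly empty or degenerate) segment, and that the set of directions of non-empty such rays is the same convex cone as for $C$; then a standard "star-shaped + radially-segment + matching directional cone" argument gives convexity. The connectedness of $\partial B \cap C$ is precisely what ensures the radial segments are not "broken" — without it, $\cone_p(\partial B \cap C)$ could consist of several disjoint cone-slivers and the simplification would fail to be convex. I would make sure to invoke \Cref{lem:local-arc} only implicitly here (it is the tool that, for small enough $B$, guarantees the hypothesis $\partial B \cap C$ is a connected arc), since in this lemma the arc-connectedness is assumed outright.
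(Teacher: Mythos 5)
Your reduction is the same as the paper's first move: both inclusions are correctly set up, and the problem is correctly reduced to showing that $\simpli{C}{B}$ is convex. But the heart of the proof --- actually establishing that convexity --- is missing, and the sketch you offer in its place would not close it. The paper's argument hinges on a fact you never identify: since $p$ is a \emph{boundary} point of the convex set $C$, a supporting line at $p$ confines $A\eqdef\partial B\cap C$ to a semicircle, so the connected arc $A$ has angular measure at most $\pi$; this (and not connectedness alone) is what makes $\cone_p(A)$ convex, because the radial projection from $p$ of a segment between two points of the cone is then the short arc between their projections, which must lie in $A$. A connected arc spanning more than $\pi$ would give a non-convex cone, so your repeated assertion that connectedness is ``precisely what ensures'' convexity misattributes the key mechanism. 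With convexity of $\cone_p(A)$ in hand, the paper finishes the mixed case by noting that any segment between points of $\simpli{C}{B}$ lies in $C$, and its intersection with $B$ is a subsegment whose endpoints lie in $\cone_p(A)$, hence (by the first case) the whole subsegment does; your sketch never handles these segments that dip into $B$ and come back out.

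Two further problems with the substitute argument you propose. First, ``star-shaped about $p$, radial sections are segments, directional cone convex'' does \emph{not} imply convexity (a circular sector with one long radial spike satisfies all three and is not convex), so the ``standard argument'' you invoke is not a valid implication without exactly the angular-measure input above. Second, the auxiliary claim that the set of directions of rays from $p$ meeting $\simpli{C}{B}$ equals the corresponding cone for $C$ is false in general: $C$ may contain points in some direction from $p$ only inside $\interior B$, with the chord never reaching $\partial B$, and such directions are (correctly) lost in $\simpli{C}{B}$ and in $\conv\bigl(\{p\}\cup(C\setminus\interior B)\bigr)$ alike. This does not contradict the lemma, but it shows the proposed route, as stated, rests on a false intermediate step. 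In short: right reduction, but the essential geometric idea and the case analysis that constitute the proof are absent.
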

\begin{proof}
The definition of $\simpli{C}{B}$ implies that $\simpli{C}{B} \subseteq \conv \bigl(\{p\}\cup (C\setminus \interior B)\bigr)$.
To show the reverse inclusion it is enough to argue that $\simpli{C}{B}$ is convex. To that
end, let $p_1,p_2\in \simpli{C}{B}$ be arbitrary, and consider two cases, 
according to how the line segment $p_1p_2$ is situated in relation to $\simpli{C}{B}$.
For brevity, denote the arc $\partial B\cap C$ by~$A$.\smallskip

\textbf{Suppose that both $p_1$ and $p_2$ lie in $\cone_p(A)$:} If one of them is $p$, the line segment $p_1p_2$ is contained in $\simpli{C}{B}$ by definition.
Otherwise, neither is equal to $p$ and we may consider the points $p_1'$ and $p_2'$ obtained by projecting $p_1$ and $p_2$ radially onto $\partial B$ from $p$. 
Since $p$ lies on the boundary of $C$, the angular measure of the arc $A$ is at most $\pi$.
As both $p_1'$ and $p_2'$ lie in $A$, this implies that the radial projection of the line segment $p_1p_2$ onto $\partial B$ is an arc in $A$ connecting $p_1'$ to $p_2'$.
So, the line segment $p_1p_2$ lies in $\simpli{C}{B}$.\smallskip

\textbf{Suppose one of $p_1$ and $p_2$ does not lie in $\cone_p(A)$:} 
The line segment $p_1p_2$ lies in $C$, and so we need prove only that any portion of it that lies in $B$ also lies in $\cone_p(A)$.
Note that if $p_1p_2\cap B$ is nonempty, then it is a line segment whose endpoints lie in $\cone_p(A)$. 
In our analysis of the first case, we showed that such a line segment is contained in $\cone_p(A)$. 
Hence $p_1p_2$ is contained in $\simpli{C}{B}$ and the result follows.
\end{proof}

\begin{proposition}\label{prop:polygonal}
Let $\X = (X_1,\ldots, X_n)$ be a tuple of compact convex sets in $\R^2$, and let $P$ be a finite set of representatives for $\X$.
If $\Y = (Y_1, \ldots, Y_n)$ is an inclusion-minimal realization of $\code(\X)$ as in \Cref{lem:inclusion-minimal}, then each $Y_i$ is a polygon. 

\end{proposition}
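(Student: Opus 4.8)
My plan is to argue by contradiction: suppose some $Y_i$ is not a polygon, and use the simplification operation to produce a strictly smaller tuple $\Y'$ still in $\mathfrak{R}$, contradicting inclusion-minimality. First I would dispose of the degenerate cases where $Y_i$ has empty interior — then $Y_i$ is a point or a line segment, both of which are (degenerate) polygons — so we may assume $\interior Y_i \neq \emptyset$. If $Y_i$ is a compact convex set with nonempty interior that is not a polygon, then its boundary $\partial Y_i$ fails to be a finite union of line segments; I would extract from this a boundary point $p$ that is not a vertex of any polygonal structure, i.e., a point near which $\partial Y_i$ is "strictly curved." More precisely, since $Y_i$ is not a polygon, there is a boundary point $p$ such that every neighborhood of $p$ contains infinitely many extreme points of $Y_i$ on $\partial Y_i$; equivalently, there is no line segment contained in $\partial Y_i$ having $p$ in its relative interior and also no half-neighborhood of $p$ on each side lying on a common line.

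The key construction is then: pick such a $p$, apply \Cref{lem:local-arc} to get $\veps > 0$ so that every small circle centered at $p$ meets $Y_i$ in a connected arc, and shrink $\veps$ further so that (a) the closed ball $B$ of radius $\veps$ about $p$ misses the finite set $P$ entirely except possibly for points of $P$ lying in $Y_i$ that we must keep — actually, more carefully, we want $B$ to avoid every point of $P$ that does not already satisfy the right incidences; since $P$ is finite and $p$ is on the boundary (not in the interior) we can ensure $B \cap P \subseteq \{$points of $P$ that lie in $Y_i\}$ and in fact we can take $B$ small enough that $B$ avoids all of $P$ unless $p \in P$ — and if $p \in P$ we note $p$ itself is retained since $p \in \cone_p(\partial B \cap Y_i)$. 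Now replace $Y_i$ by $Y_i' \eqdef \simpli{(Y_i)}{B}$ and leave the other sets unchanged. By \Cref{lem:simplification}, $Y_i' = \conv(\{p\} \cup (Y_i \setminus \interior B))$, which is compact and convex, satisfies $Y_i' \subseteq Y_i$, and contains $p$. Because $p$ was chosen to be a "curved" boundary point and not a vertex, $Y_i' \subsetneq Y_i$ strictly (some boundary arc of $Y_i$ inside $\interior B$ is replaced by the two chords of the cone, cutting off a region of positive area). We must then check $\Y' \eqdef (Y_1, \dots, Y_i', \dots, Y_n) \in \mathfrak{R}$: condition \ref{im:i} is immediate; condition \ref{im:ii} holds because $Y_i' \cap P = Y_i \cap P$ since $B$ was chosen to avoid $P$ (or retain $p$); and condition \ref{im:iii}, $\code(\Y') = \code(\X)$, follows because $\code(\Y') \subseteq \code(\Y) = \code(\X)$ trivially (we only shrank a set, and $P$ still realizes every codeword, giving the reverse inclusion $\code(\X) \subseteq \code(\Y')$), while $\code(\Y') \subseteq \code(\X)$ needs the observation that shrinking $Y_i$ within $B$ can only change intersection patterns at points of $B$, and one argues that any intersection pattern achieved at a point $q \in B \cap (Y_i \setminus Y_i')$ — where $i$ has been "removed" — is also achieved at some nearby point outside $Y_i$ entirely, using local structure near $p$; alternatively, and more cleanly, one picks a point of $P$ or uses that the codeword $\ipat{\Y'}{q}$ is squeezed between $\ipat{\Y}{q'}$ for $q'$ just outside $B$.

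The main obstacle I anticipate is precisely the verification of \ref{im:iii}: showing that the local surgery does not create a genuinely new codeword. The danger is a point $q$ in the "bite" $Y_i \setminus Y_i'$ where the pattern $\ipat{\Y'}{q} = \ipat{\Y}{q} \setminus \{i\}$ might not appear anywhere in $\code(\X)$. To handle this I would choose $B$ small enough (again using finiteness of $P$ and compactness) that the bite $Y_i \setminus Y_i'$ is contained in a neighborhood of $p$ so small that for every $q$ in it, every $X_j$ with $j \neq i$ either contains a full neighborhood of $q$ or is disjoint from it — this is possible because only finitely many of the $Y_j$ have $p$ on their boundary, and for those we can further shrink $B$ to avoid their boundaries' "bad behavior," or note that $p \in \partial Y_j$ forces (by \Cref{lem:local-arc} applied to $Y_j$) a controlled local picture; then the pattern $\ipat{\Y'}{q}$ already occurs at points just outside $B$, hence lies in $\code(\Y) = \code(\X)$. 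Once \ref{im:iii} is secured, $\Y' \in \mathfrak{R}$ with $\Y' > \Y$ contradicts maximality of $\Y$, so every $Y_i$ must have been a polygon, and the proof is complete.
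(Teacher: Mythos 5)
There is a genuine gap, and it sits exactly where you predicted: the verification of condition \ref{im:iii}. Your surgery replaces only $Y_i$ by $\simpli{Y_i}{B}$ and leaves every other set unchanged, and then you hope to choose $B$ so small that each $Y_j$, $j\neq i$, either contains a neighborhood of every point of the ``bite'' $Y_i\setminus Y_i'$ or is disjoint from it. That is impossible whenever some other $Y_j$ has $p$ on its boundary: no matter how small $B$ is, $\partial Y_j$ enters $B$, and \Cref{lem:local-arc} only tells you that $\partial B\cap Y_j$ is an arc --- it does not make $Y_j$ locally trivial near $p$, nor does it show that the pattern $\ipat{\Y}{q}\setminus\{i\}$ at a point $q$ of the bite already occurs just outside $B$. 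The danger is real, not just unverified: if, say, $Y_j\subseteq Y_i$ globally with $p\in\partial Y_i\cap\partial Y_j$ and $Y_j$ bulges past the chords of $\cone_p(\partial B\cap Y_i)$, then after your surgery there are points lying in $Y_j$ but not in $Y_i'$, producing a codeword containing $j$ but not $i$ that need not belong to $\code(\X)$ at all. So the tuple $\Y'$ you build may fail \ref{im:iii}, and the contradiction with minimality evaporates.

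The paper closes this gap by performing the simplification \emph{simultaneously} on every set whose boundary passes through $p$: it takes a boundary point $p$ of $Y_i$ that is a limit of extreme points, chooses one ball $B$ small enough that (a) $B$ meets $\partial Y_j$ only if $p\in\partial Y_j$, (b) for each such $Y_j$ the arc condition of \Cref{lem:local-arc} holds (with the obvious adjustment for degenerate $Y_j$), and (c) $B$ avoids $P\setminus\{p\}$, and then sets $Y_j'=\simpli{Y_j}{B}$ for \emph{all} $j$ with $p\in\partial Y_j$. Because all the modified sets are coned off from the same apex $p$ over their traces on $\partial B$, the radial projection from $p$ onto $\partial B$ sends any $q\in B\setminus\{p\}$ to a point $q'$ with $\ipat{\Y'}{q}=\ipat{\Y}{q'}$, which immediately gives $\code(\Y')\subseteq\code(\Y)$; the reverse inclusion uses only that $B$ misses $P\setminus\{p\}$ and that the pattern at $p$ is unchanged. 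If you revise your argument to cone off every set through $p$ with the same apex and ball, and replace ``curved boundary point'' by ``limit point of the extreme points of $Y_i$'' (which exists by compactness when $Y_i$ is not a polygon, and which guarantees $Y_i'\subsetneq Y_i$), your outline becomes the paper's proof.
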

\begin{proof}
Recall that $x$ is an \emph{extreme point} of a convex set $C$ if $x$ is not contained in $\conv(C\setminus \{x\})$. 
Every compact convex set is the convex hull of its extreme points, and so it will suffice to show that every $Y_i$ has finitely many extreme points.
Suppose for contradiction that some $Y_i$ has infinitely many extreme points. 
Each extreme point must lie on the boundary of $Y_i$, and since the boundary is compact there must exist a point $p$ on the boundary of $Y_i$ which is a limit point of the extreme points of $Y_i$.
Fix this $i$ and the point $p$ for the remainder of the proof.

Choose a closed ball $B$ centered at $p$ whose radius is small enough as to satisfy the following four conditions for each $j\in [n]$:
\begin{itemize}
\item $B$ is disjoint from $\partial Y_j$ unless $p\in \partial Y_j$,
\item if $Y_j$ has a non-empty interior and $p\in \partial Y_j$ then $B$ satisfies the conclusion of \Cref{lem:local-arc}, i.e., $\partial B\cap Y_j$ is a connected arc spanning an angle of no more than $\pi$,
\item if $Y_j$ is a line segment containing $p$, the set $\partial B\cap Y_j$ contains at least one point,
\item $B$ is disjoint from $P\setminus \{p\}$.
\end{itemize}
Now let $\Y' = (Y_1', \ldots, Y_n')$ where \[
Y_j' \eqdef \begin{cases} Y_j &\text{if $p\notin \partial Y_j$}, \\
\simpli{Y_j}{B} & \text{if $p\in \partial Y_j$}.
\end{cases}
\]
Note that if $Y_j$ is a line segment, then $\partial Y_j=Y_j$ and so $Y_j'=\simpli{Y_j}{B}$.
Clearly $Y_j'\subseteq Y_j$ for every~$j\in [n]$.
Moreover, the simplification $Y_i'=\simpli{Y_i}{B}$ is a proper subset of $Y_i$ as $p$ is no longer a limit point of the extreme
points.

We claim that $\Y'$ is a closed convex realization of $\code(\Y)$. 
Observe that each $Y_j'$ is compact and convex, for the sets $\simpli{Y_j}{B}$ are convex hulls of compact sets by \Cref{lem:simplification}.
It remains to argue that $\code(\Y') = \code(\Y)$. 
For this, consider any point $q\in \R^d$.
If $q\notin B$ or $q = p$, then clearly $\ipat{\Y'}{q} = \ipat{\Y}{q}$.
Otherwise, consider the radial projection centered at $p$ onto $\partial B$.
Let $q'\in \partial B$ be the projection of the point~$q$.
From the definition of simplification, $q\in Y_j'$ if and only if $q'\in Y_j$. 
Thus we have $\ipat{\Y'}{q} = \ipat{\Y}{q'}$, and $\code(\Y')\subseteq \code(\Y)$.
For the reverse containment, recall that $B$ avoids our set of representatives for $\Y$, except possibly $p$.
The intersection pattern at $p$ is the same in $\Y'$ as in $\Y$, so every member of $\code(\Y)$ appears in $\code(\Y')$.
Thus $\code(\Y') = \code(\Y)$.
However, $\Y'$ satisfies (i), (ii) and (iii) of \Cref{lem:inclusion-minimal}, and so the fact that $Y_i'$ is a proper subset of $Y_i$ contradicts our choice of $\Y$ as an inclusion-minimal realization. 
Thus $\Y$ must be polygonal, and the result follows.
\end{proof}

\section{Step 3: Bounding the number of vertices}
Our main task in this section is to prove the first part of \Cref{thm:plane}, which says that every convex code admitting a closed $2$\nobreakdash-realization admits a polygonal
realization in which the number of vertices is bounded by an explicit function of the code size. Indeed, one can express the question of whether
a convex code $\C$ is $2$-realizable by polygons with at most $m$ vertices as a sentence in the language of fields. As the theory of real closed
fields is decidable (originally shown by Tarski \cite{tarski}, see for example \cite{basu_pollack_roy} for a modern exposition), this will imply
\Cref{thm:plane}. 

Let $\X=(X_1,X_2,\dotsc,X_n)$ be a closed $2$\nobreakdash-realization of a convex code~$\C$.
Let $P$ be a set of representatives for $\X$ of size $\abs{P}\leq 2^n$.
We may assume that $\X$ is inclusion-minimal with respect to $P$, as in \Cref{lem:inclusion-minimal}, which by \Cref{prop:polygonal} implies that each $X_i$ is a closed polygon.
We shall show that, if $\X$ has too many vertices, then we may shrink $\X$ by pulling a vertex inward.
As $\X$ is inclusion-minimal, a bound on the number of vertices will follow.

\paragraph{Locating good vertices.} The boundary of each $X_i$ is made of \emph{vertices} and \emph{edges} that join them. We adopt the convention
that the edges are relatively open, i.e., \emph{edges do not contain their endpoints}.
A point $p$ is \emph{good} with respect to $(X_1,\dotsc,X_n)$ if $p$ does not lie on an edge of any~$X_i$.
A point that is not good is called \emph{bad}. 
A point $p$ is \emph{very good} with respect to $(X_1,\dotsc,X_n)$ if it is good and $p\notin X_1\setminus \operatorname{vertices}(X_1)$.
\begin{lemma}\label{lem:goodvertices}
  Suppose $X_1,\dotsc,X_n$ are convex polygons in $\R^2$ and $X_1$ has $K$ vertices. Then there is $j$ such that $X_j$ contains
  at least $\frac{1}{3^{n-1}(n-1)!} K$ very good vertices with respect to the tuple $(X_1,\dotsc,X_n)$.
\end{lemma}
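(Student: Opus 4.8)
The plan is to argue by induction on $n$. The base case $n=1$ is immediate: every vertex of $X_1$ lies off the (open) edges of $X_1$ and, being a vertex of $X_1$, lies outside $X_1\setminus\operatorname{vertices}(X_1)$, so all $K=3^{0}\cdot 0!\cdot K$ of them are very good. For the inductive step it is convenient to prove a mildly strengthened statement in which one carries along a ``forbidden region'': given $X_1,\dots,X_n$, a convex set $Z$ with $\interior(X_1)\subseteq Z$, and a set $S$ of $K$ vertices of $X_1$ with $S\cap Z=\varnothing$, some $X_j$ has at least $\tfrac{K}{3^{n-1}(n-1)!}$ vertices that lie off every edge of $X_1,\dots,X_n$ and outside $Z$. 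Taking $Z=\interior(X_1)$ and $S=\operatorname{vertices}(X_1)$ recovers \Cref{lem:goodvertices}, since a point off every edge of $X_1$ and outside $\interior(X_1)$ lies outside $X_1\setminus\operatorname{vertices}(X_1)$, hence is very good.

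For the inductive step, note that a vertex $v\in S$ can fail the desired properties only by lying in the relative interior of an edge of one of $X_2,\dots,X_n$ (it cannot lie on an edge of $X_1$, and $v\notin Z$ by hypothesis). So either at least a $\tfrac{1}{3^{n-1}(n-1)!}$-fraction of $S$ already has the desired properties---and we finish with $X_j=X_1$---or, pigeonholing over the $n-1$ indices $i\in\{2,\dots,n\}$, some $X_i$ contains at least roughly $\tfrac{K}{n-1}$ members of $S$ in relative interiors of its edges. This is where the \emph{transfer step} enters. Let $T$ be this set and $t=\abs{T}$. Since no line meets a convex polygon in three of its vertices, each edge of $X_i$ carries at most two points of $T$; and if a point of $T$ lies in the relative interior of an edge $e=[a,b]$ of $X_i$, then $e\not\subseteq X_1$ (otherwise that point is a convex combination of $a$ and $b$ inside $X_1$, contradicting extremality), while if \emph{two} points of $T$ lie in the relative interior of the same $e$, then $X_1\cap\aff(e)$ is exactly the segment between them, forcing both $a$ and $b$ outside $X_1$. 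Letting $U$ be the set of vertices of $X_i$ lying outside $X_1$ and counting incidences between the edges of $X_i$ carrying points of $T$ and their endpoints in $U$---each such edge contributing at least as many incidences as it carries points of $T$, and each vertex of $X_i$ lying on exactly two edges---yields $\abs{U}\ge t/2\gtrsim\tfrac{K}{2(n-1)}$. Every point of $U$ lies outside $X_1$, hence off every edge of $X_1$ and outside $\interior(X_1)\subseteq Z$; so one applies the inductive hypothesis to the $n-1$ polygons $X_2,\dots,X_n$, with $X_i$ in the role of the distinguished polygon, $U$ in the role of $S$, and $Z\cup X_1$ in the role of the forbidden region, obtaining some $X_j$ with at least $\tfrac{1}{3^{n-2}(n-2)!}\cdot\tfrac{K}{2(n-1)}\ge\tfrac{K}{3^{n-1}(n-1)!}$ vertices having all the required properties.

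The main obstacle is keeping this accounting honest through the recursion. The enlarged forbidden region $Z\cup X_1$ ceases to be convex once more than one polygon has been peeled off, whereas the transfer step relies on the fact that a point outside a \emph{convex} set lying in the relative interior of a segment has an endpoint outside that set; the delicate point is therefore to arrange the recursion so that one can choose the transfer-endpoint avoiding \emph{all} of the finitely many convex pieces of the accumulated forbidden region at once, rather than merely the most recently peeled polygon---this is the crux and must be handled by restricting attention to the edges for which such a common endpoint exists without losing too much in the pigeonhole count. The second, more mechanical, difficulty is that the constants are tight: the transfer may lose only a factor of $2$ and the pigeonhole only a factor slightly above $n-1$, so the exact threshold separating the two cases, together with the sharp ``two vertices on one edge'' claim, is precisely what makes the final bound come out to $3^{n-1}(n-1)!$, the extra room between $2$ and $3$ at each level absorbing these two losses.
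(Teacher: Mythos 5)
Your reduction to a strengthened statement and your base case are fine, and your transfer step (at most two marked vertices per open edge, at least one endpoint of that edge escaping the distinguished polygon, hence $\abs{U}\ge t/2$) is a legitimate variant of the paper's at-most-two-to-one map $v\mapsto u(v)$; the arithmetic $\tfrac{1}{3^{n-2}(n-2)!}\cdot\tfrac{K}{3(n-1)}=\tfrac{K}{3^{n-1}(n-1)!}$ also matches. But there is a genuine gap at exactly the point you flag and then defer: the induction does not close. To invoke your inductive hypothesis at the next level you need the transferred set $U$ to be disjoint from the \emph{entire} new forbidden region $Z\cup X_1$, while your transfer only guarantees that the chosen endpoints avoid the current distinguished polygon; nothing makes them avoid the older pieces of $Z$. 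At the top level this is harmless because $Z=\interior(X_1)$, but from depth two onward the accumulated region is a union of two or more polygons, and an edge $e$ of a third polygon containing a marked vertex $v$ can perfectly well have one endpoint inside a previously peeled polygon and the other inside the current distinguished one: since $v$ avoids the peeled polygon, that polygon meets $\aff(e)$ only on one side of $v$, and since $v$ is extreme in the current distinguished polygon, that polygon meets $\aff(e)$ only on one side of $v$ as well --- but these sides can be opposite, so no admissible endpoint need exist for $e$. Your proposed remedy (``restrict to edges for which a common endpoint exists'') is not carried out, and there is no argument that such edges are plentiful; as you note yourself, the constants leave room only for a factor $2$ in the transfer and a factor $n-1$ in the pigeonhole, so this is the crux of the lemma, not a routine patch. (Two smaller symptoms of the same confusion: the hypothesis $\interior(X_1)\subseteq Z$ of your strengthened statement is already violated at the first recursive call, and the phrase ``outside $\interior(X_1)\subseteq Z$'' conflates avoiding a subset of $Z$ with avoiding $Z$.)

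The paper resolves precisely this difficulty by never letting the forbidden region detach from the distinguished polygon: instead of recursing on $(X_2,\dots,X_n)$ with $X_1$ remembered as a side constraint, it recurses on $(X_2',X_3,\dots,X_n)$ with $X_2'=\conv(X_1\cup X_2)$. The merged set is again a single convex polygon, so ``very good with respect to the new tuple'' automatically forbids all of $(X_1\cup X_2)\setminus\operatorname{vertices}(X_2')$, and every vertex of $X_2'$ is an extreme point of $X_1$ or of $X_2$, hence lies on no open edge of either; the transfer then sends each bad vertex of $X_1$ to an extremal vertex of $X_2'$, and this map is at most two-to-one. If you want to salvage your scheme, the fix is to merge rather than accumulate: make $\conv(X_1\cup X_i)$ the new distinguished polygon and count its vertices, rather than keeping $X_i$ distinguished and carrying $X_1$ as a non-convex appendage to $Z$.
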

\begin{proof}
  We argue by induction on~$n$. The base case $n=1$ is trivial, so assume that $n\geq 2$. If at least $\frac{1}{3^{n-1}(n-1)!} K$
  vertices of $X_1$ are good, then we are done with $j=1$. Otherwise, let $B$ be the set of all bad vertices in $X_1$. Note that
  $\abs{B}\geq \bigl(1-\frac{1}{3^{n-1}(n-1)!}\bigr)K\geq \tfrac{2}{3}K$. For each $i\neq 1$, let $B_i$ be the set of those points in $B$
  that lie on some edge of~$X_i$. Since $B=\bigcup_{i\geq 2} B_i$, it follows from the pigeonhole principle that one of $B_2,\dotsc,B_n$
  is large. Say, without loss of generality, that $\abs{B_2}\geq \tfrac{2}{3}K/(n-1)$.

\begin{figure}[h]
\[
\includegraphics{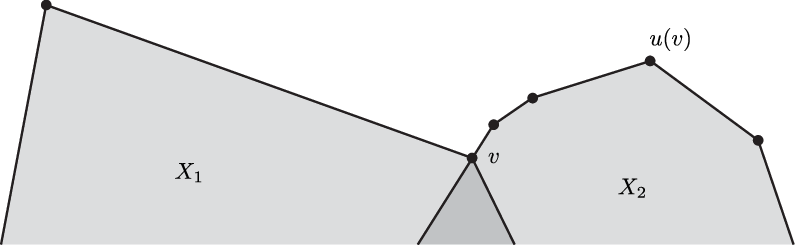}
\]
\caption{Associating bad vertices of $X_1$ to vertices of $X_2'$.}\label{fig:uv}
\end{figure}

  For each $v\in B_2$ choose a direction for which $v$ is an extremal point of~$X_1$.
  Let $u(v)$ be the vertex of $X_2'\eqdef \conv(X_1\cup X_2)$ that is extremal for this direction 
  (if there are two such vertices, pick arbitrarily). Note that $u(v)$ is a vertex of $X_2$
  that lies between two consecutive intersection points of $\partial X_1$ with~$\partial X_2$.
  So, the map $v\mapsto u(v)$ from $B_2$ to $\operatorname{vertices}(X_2')$ is at most two-to-one. See \Cref{fig:uv}.

  It follows that the polygon $X_2'$ has at least $\abs{B_2}/2\geq \frac{K}{3(n-1)}$ vertices.
  By the induction hypothesis applied to the $(n-1)$-tuple $(X_2',X_3,X_4,\dotsc,X_n)$ there is a $j\in\{2,3,\dotsc,n\}$ and
  a set of $\frac{1}{3^{n-2}(n-2)!}\cdot \frac{K}{3(n-1)}$ very good vertices in $X_j$ (if $j>2$) or $X_2'$ (if $j=2$) with
  respect to this $(n-1)$-tuple. Since $\frac{1}{3^{n-2}(n-2)!}\cdot \frac{K}{3(n-1)}=\frac{1}{3^{n-1}(n-1)!}K$, the proof is complete.
\end{proof}
\begin{corollary}\label{cor:goodpair}
Suppose $\X=(X_1,\dotsc,X_n)$ is an $n$-tuple of convex polygons in the plane, 
and $P$ is a finite set of representatives for $\X$. If one of $X_1,\dotsc,X_n$ has more than $3^n(n-1)!\abs{P}$ vertices,
then there exist two distinct $v,v'\in \R^2$ such that
\begin{itemize}
\item[(i)] $v$ is a vertex of some polygon in $\X$, and
\item[(ii)] Both $v$ and $v'$ are good with respect to $\X$, and
\item[(iii)] $v,v'\notin P$, and
\item[(iv)] $\ipat{\X}{v}=\ipat{\X}{v'}$.
\end{itemize}
\end{corollary}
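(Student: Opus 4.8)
The plan is to deduce the corollary from \Cref{lem:goodvertices} by pigeonhole, with no new geometric input. Since the hypothesis only asserts that \emph{some} polygon in $\X$ has more than $3^n(n-1)!\abs{P}$ vertices, I would first relabel the tuple so that this polygon is $X_1$; the relabeling is harmless because whether a point is \emph{good} does not depend on the ordering of the tuple, so any pair $v,v'$ produced for the relabeled tuple also works for $\X$. Write $K$ for the number of vertices of $X_1$, so $K > 3^n(n-1)!\abs{P}$.

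Next I would apply \Cref{lem:goodvertices} to obtain an index $j$ for which $X_j$ contains at least $\tfrac{1}{3^{n-1}(n-1)!}K$ very good vertices with respect to $\X$. Substituting the bound on $K$, this count exceeds $3\abs{P}$; discarding the at most $\abs{P}$ of these vertices that lie in $P$ leaves a set $V$ of more than $2\abs{P}$ very good vertices of $X_j$, none of them in $P$. Every element of $V$ is in particular good, is a vertex of the polygon $X_j$ in $\X$, and avoids $P$, so conditions (i), (ii), (iii) already hold for any two elements of $V$.

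For the pattern condition (iv) I would use that $\ipat{\X}{q}\in\code(\X)$ for every $q\in\R^2$, while $\abs{\code(\X)}\le\abs{P}$ because $P$ is a set of representatives for $\X$. Hence $v\mapsto\ipat{\X}{v}$ maps $V$, which has size greater than $2\abs{P}$, into a set of at most $\abs{P}$ patterns, and pigeonhole yields two distinct $v,v'\in V$ with $\ipat{\X}{v}=\ipat{\X}{v'}$. This pair $v,v'$ is the one required by the corollary.

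I do not expect any real obstacle: all the content lies in \Cref{lem:goodvertices}, and the factor-of-$3$ slack between the hypothesis $3^n(n-1)!\abs{P}$ and what the pigeonhole needs comfortably absorbs both the deletion of vertices lying in $P$ and the bound of $\abs{P}$ on the number of possible intersection patterns. The only points to verify carefully are that ``good'' is permutation-invariant, which legitimizes the relabeling, and that a ``very good vertex of $X_j$'' is by definition a vertex of a polygon in $\X$, so that (i) is automatic; note that \Cref{lem:goodvertices} actually supplies the stronger property ``very good,'' which we simply weaken to ``good'' as the corollary asks.
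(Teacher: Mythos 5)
Your proposal is correct and follows essentially the same route as the paper: apply \Cref{lem:goodvertices} (after harmlessly relabeling so the large polygon is $X_1$) to get more than $3\abs{P}$ good vertices in some $X_j$, discard the at most $\abs{P}$ lying in $P$, and pigeonhole the remaining more than $2\abs{P}$ vertices over the at most $\abs{P}$ codewords represented by $P$ to find the pair satisfying (iv). The only difference is that you spell out the permutation-invariance of ``good'' and the bound $\abs{\code(\X)}\le\abs{P}$, which the paper leaves implicit.
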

\begin{proof}
By the previous lemma, some $X_j$ contains more than $3\abs{P}$ good vertices with respect to~$\X$.
More than $2\abs{P}$ among them satisfy (iii). By the pigeonhole principle, a pair satisfies (iv) as well.
\end{proof}

\paragraph{Pulling a vertex.} Let $\X$, $v$ and $v'$ be as in \Cref{cor:goodpair}. We will argue that $\X$ is not inclusion-minimal. By permuting the polygons in $\X$,
we may assume that $v$ is a vertex of $X_1,X_2,\dotsc,X_m$ and is not a vertex of $X_{m+1},\dotsc,X_n$,
for some $m\geq 1$. Let $B$ be a ball around~$v$ small enough so that the only polygon vertex inside $B$ is~$v$. We shall shrink each of $X_1,X_2,\dotsc,X_m$ by modifying
inside~$B$. Specifically, move the vertex $v$ slightly toward $v'$ along the line segment~$vv'$ to obtain a new vertex~$\tilde{v}$.
Then, for each $i\in [m]$, let $a_i$ and $b_i$ be the intersection points between the boundaries of $X_i$ and $B$.
If $X_i$ is a line segment, then the points $a_i$ and $b_i$ are equal.
Let $\widetilde{X}_i$ be obtained from $X_i$ by replacing the line segments $a_iv$ and $b_iv$ with
$a_i\tilde{v}$ and $b_i\tilde{v}$ respectively.

\begin{figure}[h]
  \begin{center}
  \begin{tikzpicture}
    \def\seglen{0.79cm}  
    \def\angleA{250}
    \def\angleB{20}
    \def\slen{1.6cm}
    \def\angleVprime{-25}
    \def\vprime{\angleVprime:1.9cm}
    \def\vtilde{\angleVprime:0.2cm}
    \def\distanceapart{3cm}
    \def\labeloffset{0.5cm}
    \def\drawcommon#1{%
    \draw (0,0) circle [radius=1cm];
    \fill (\angleA:1cm) circle (2pt) node[below left, yshift=3pt] {$a_i$};
    \fill (\angleB:1cm) circle (2pt) node[below right, yshift=3pt] {$b_i$};
    \node at (\angleB:\slen) [yshift=0.2cm] {$#1$};
    \fill (\vprime) circle (2pt) node [above] {$v'$};
    \node at (135:1.2cm) {$B$};
    }
    \begin{scope}[xshift=-\distanceapart];  
    \fill [filled convex set] (\angleA:\slen) -- (0,0) -- (\angleB:\slen) decorate[decoration={snake,segment length=\seglen}]  { .. controls (3,-1) and (0.4,-1.7)  .. cycle};
    \drawcommon{X_i}
    \draw[convex set] (\angleA:\slen) -- (0,0) -- (\angleB:\slen);
    \fill (0,0) circle (2pt) node [shift={(-0.1,0.2)}] {$v$};
    \draw[dotted] (\vprime) -- (0,0);
    \node at (\labeloffset,-2.1) {\textbf{a)} Before};
    \end{scope}
    \begin{scope}[xshift=\distanceapart];  
    \fill [filled convex set] (\angleA:\slen) -- (\angleA:1cm) -- (\vtilde) -- (\angleB:1cm) -- (\angleB:\slen) decorate[decoration={snake,segment length=\seglen}]  { .. controls (3,-1) and (0.4,-1.7)  .. cycle};
    \drawcommon{\widetilde{X}_i}
    \draw[convex set] (\angleA:\slen) -- (\angleA:1cm) -- (\vtilde) -- (\angleB:1cm) -- (\angleB:\slen);
    \fill (\vtilde) circle (2pt) node [shift={(-0.1,0.2)}] {$\tilde{v}$};
    \draw[dotted] (\vprime) -- (\vtilde);
    \node at (\labeloffset,-2.1) {\textbf{b)} After};
    \end{scope}
  \end{tikzpicture}
\caption{Pulling the vertex $v$ toward $v'$ in $X_i$.}
\end{center}
\end{figure}

Since $\ipat{\X}{v}=\ipat{\X}{v'}$ and $v\in X_1,\dotsc,X_m$, it follows that $v'\in X_1,\dotsc,X_m$ as well.
Thus, the sets $\widetilde{X}_1,\dotsc,\widetilde{X}_m$ are convex as long as $\tilde{v}$ is a sufficiently
small perturbation of~$v$.
Now consider the tuple $\widetilde{X}=(\widetilde{X}_1,\dotsc,\widetilde{X}_m,X_{m+1},\dotsc,X_n)$.
Note that we may obtain $B\cap \widetilde{X}_1,\dotsc,B\cap \widetilde{X}_m$ from $B\cap X_1,\dotsc,B\cap X_m$ 
by a continuously deforming $B$ while keeping its boundary fixed. Hence,
the $m$-tuples $B\cap X_1,\dotsc,B\cap X_m$ and $B\cap \widetilde{X}_1,\dotsc,B\cap \widetilde{X}_m$
realize the same convex code. Furthermore, since the vertex $v$ is good, the ball $B$ can be
chosen so small that that it does not intersect the boundary of $X_{m+1},X_{m+2},\dotsc,X_n$,
and so $\code(\X)=\code(\X')$. As $P\cap B=\emptyset$ for sufficiently small $B$,
we conclude that $\X$ is not inclusion-minimal. 

We immediately obtain the following corollary.

\begin{corollary}\label{cor:part(i)}
Let $\Y$ be an inclusion-minimal $2$\nobreakdash-realization of a convex code on $[n]$ with respect to a set of representatives $P$, as in \Cref{lem:inclusion-minimal}. 
Then each set in $\Y$ is a polygon with no more than $3^n(n-1)!|P|$ vertices. 
\end{corollary}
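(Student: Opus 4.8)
The plan is to combine the three ingredients assembled in this section. First, by \Cref{prop:polygonal} the inclusion-minimality of $\Y$ already forces each $Y_i$ to be a polygon, so only the vertex bound remains to be established. Suppose toward a contradiction that some $Y_j$ has more than $3^n(n-1)!\,\abs{P}$ vertices. Then \Cref{cor:goodpair} supplies two distinct points $v,v'\in\R^2$ with $v$ a vertex of some polygon in $\Y$, both $v$ and $v'$ good with respect to $\Y$, both lying outside $P$, and $\ipat{\Y}{v}=\ipat{\Y}{v'}$.

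Next I would run the ``pulling a vertex'' construction on this pair. After relabeling, $v$ is a vertex of exactly $Y_1,\dotsc,Y_m$ for some $m\ge 1$, and is not a vertex of $Y_{m+1},\dotsc,Y_n$. Choose a ball $B$ centered at $v$ so small that $v$ is the only polygon vertex lying in $B$, that $B$ misses $\partial Y_{m+1},\dotsc,\partial Y_n$ (possible since $v$ is good), and that $B\cap P=\emptyset$ (possible since $v\notin P$). Move $v$ a small distance toward $v'$ along the segment $vv'$ to obtain $\tilde v$, and define $\widetilde{Y}_i$ for $i\le m$ to be $Y_i$ with its two edges at $v$ rerouted through $\tilde v$ (with the two boundary intersection points $\partial Y_i\cap\partial B$ coinciding when $Y_i$ is a segment), leaving $Y_{m+1},\dotsc,Y_n$ unchanged. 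Since $\ipat{\Y}{v}=\ipat{\Y}{v'}$ and $v\in Y_1,\dotsc,Y_m$, we also have $v'\in Y_1,\dotsc,Y_m$, so for a sufficiently small perturbation each $\widetilde{Y}_i$ remains compact and convex and is a \emph{proper} subset of $Y_i$.

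Finally I would verify that the modified tuple $\widetilde{\Y}$ still satisfies conditions \ref{im:i}, \ref{im:ii} and \ref{im:iii} of \Cref{lem:inclusion-minimal}: the containment $\widetilde{Y}_i\subseteq Y_i$ is immediate; condition \ref{im:ii} is untouched because $B\cap P=\emptyset$; and for condition \ref{im:iii} note that the intersection pattern is unchanged at every point outside $B$, while inside $B$ the edited $m$-tuple $B\cap\widetilde{Y}_1,\dotsc,B\cap\widetilde{Y}_m$ is obtained from $B\cap Y_1,\dotsc,B\cap Y_m$ by a continuous deformation of $B$ fixing $\partial B$, hence realizes the same local convex code, so $\code(\widetilde{\Y})=\code(\Y)$. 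This contradicts the inclusion-minimality of $\Y$, and the stated bound on the number of vertices follows. I expect the only delicate point to be arranging all the smallness requirements on $B$ and on the perturbation $\tilde v$ simultaneously --- convexity of each $\widetilde{Y}_i$, code-preservation inside $B$, disjointness of $B$ from $P$ and from the remaining boundaries --- but each of these is an open condition on the radius of $B$ and on $\tilde v$, so a small enough choice works for all of them at once.
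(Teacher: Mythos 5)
Your proposal is correct and follows the paper's own route: \Cref{prop:polygonal} gives polygonality, \Cref{cor:goodpair} locates the pair $v,v'$ when some polygon exceeds $3^n(n-1)!\abs{P}$ vertices, and the vertex-pulling construction (with $B$ small enough to avoid $P$, other boundaries, and all other vertices, and the boundary-fixing deformation preserving the local code) produces a strictly smaller tuple satisfying \ref{im:i}--\ref{im:iii}, contradicting inclusion-minimality. This is exactly how the paper derives the corollary, so no further comment is needed.
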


As we may choose $|P|\le 2^{n}$, we conclude that every convex code that admits a closed convex realization in the plane admits a realization by polygons with at most $6^n(n-1)!$ vertices each.

\paragraph{Closed \texorpdfstring{$2$}{2}-realizability is decidable.} Let $N=6^n(n-1)!$.
Every polygon with at most $N$ vertices is an intersection of $N$ closed halfspaces.
Writing a polygon
$X_i$ as $\bigcap_{j=1}^{N} \{a_{i,j}x+b_{i,j}y\leq c_{i,j}\}$ with $a_{i,j},b_{i,j},c_{i,j}\in \R$, we can write
a formula asserting that a point $(x,y)$ is in $X_i$,
\[
\Psi_i(\vec{a}, \vec{b}, \vec{c}, x, y) \eqdef \forall j\in[N]\, (a_{i,j} x + b_{i,j} y \le c_{i,j}), 
\]
where we abbreviated $\vec{a}\eqdef (a_{i,j})_{i\in [n],j\in [N]}$, and similarly for $\vec{b}$ and $\vec{c}$.
We can then write a formula asserting that $S\in \code(X_1,\dotsc,X_n)$,
\[
\Phi(S;\vec{a},\vec{b},\vec{c}\,)\eqdef \exists x,y\, \forall i \in [n] \Big(i \in S \, \Leftrightarrow \, \Psi_i(\vec{a}, \vec{b}, \vec{c}, x,y)\Big).
\]
We can also write a formula asserting that $X_i$ is bounded,
\[
  \Gamma_i(\vec{a},\vec{b},\vec{c}\,)\eqdef \exists r\,\Bigl(x^2+y^2>r \implies \neg \Psi_i(\vec{a}, \vec{b}, \vec{c}, x, y)\Bigr).
\]
Then the assertion that a convex code $\C\subseteq 2^{[n]}$ admits a closed $2$\nobreakdash-realization can be written as
\[
  \exists \vec{a},\vec{b},\vec{c} \bigwedge_{i\in [n]} \Gamma_i(\vec{a},\vec{b},\vec{c}\,)\wedge \bigwedge_{S\in \C} \Phi(S;\vec{a},\vec{b},\vec{c}\,)\wedge \bigwedge_{S\notin \C} \neg\Phi(S;\vec{a},\vec{b},\vec{c}\,).
\]
As mentioned above, the theory of real closed fields is decidable, and hence so is the question of whether a convex code
admits a closed $2$\nobreakdash-realization.

\begin{remark}
Kunin, Lienkaemper and Rosen~\cite{KLR} were the first to explain that, for any fixed computable function of $n$ and $d$, there is an algorithm to decide whether a convex code has a polytopal $d$\nobreakdash-realization with the total number of vertices bounded by this function (in fact they considered a bound on the number of facets, but this is the same up to a slight modification of the computable function in question).
Seeking a combinatorial argument that $d$-realizability is decidable, they reduced the problem of finding a $d$\nobreakdash-realization with a bounded number of vertices to the problem of determining whether certain oriented matroids are representable.
We have opted for the direct statements above, since deciding representability of oriented matroids is complete with respect to the theory of real closed fields, and hence the reduction to oriented matroids does not improve the computational complexity of the decision problem.
\end{remark}

\section{Complexity: Realizations requiring many vertices}\label{sec:many}
In this section we prove \Cref{thm:manyvertices}, which gives a lower bound on the number
of vertices in a realization $\X$ of any code $\C$ by closed polygons.

Let $\ell$ be some vertical line. The intersection pattern of $\X$ at $p\in \ell$ 
is determined by the position of $p$ relative to the intersection points of $\partial X_1,\dotsc,\partial X_n$ with $\ell$. 
Let $P(\ell)$ be the set of intersection patterns that occur on $\ell$. 

Imagine starting with $\ell$ to the left of all $X_1,\dotsc,X_n$, and sweeping $\ell$ to the right across the plane. The relative positions of the intersection points between $\ell$ and the various $\partial X_i$ changes only as $\ell$ passes through some certain \emph{special} points: leftmost and rightmost points of the polygons, and the points
where two polygon edges intersect transversely. By rotating $\R^2$ slightly, we may ensure that the $x$-coordinates
of special points are distinct. However, it might still happen that a point $p$ is special for several different reasons: for example,
if several different pairs of polygons intersect at~$p$. In this case, we speak of \emph{multiplicity} of $p$,
and denote it by $m(p)$. 

As the line $\ell$ passes through a point $p$ of multiplicity $m(p)$, the set $P(\ell)$ changes, and we may obtain intersection patterns
that did not appear to the left of $p$. We obtain a new
pattern when $\ell$ reaches $p$, and then at most one new pattern per each polygonal segment through $p$ once $\ell$ passes over~$p$.
Hence, the total number of new patterns is at most $1+2m(p)\leq 4m(p)$.
Letting $M$ be the number of special points (with multiplicity), it follows that $\abs{\C}\leq 4M$.

Let $N$ be the total number of vertices among the polygons $X_1,\dotsc,X_n$. Since a line segment
belonging to one of $\partial X_1,\dotsc,\partial X_n$ intersects $\partial X_1,\dotsc,\partial X_n$
transversely in a total of at most $2n-2$ points, it follows
that $M\leq 2n+(2n-2)N$. Therefore, $\abs{\C}\leq 8nN$, as desired.

\section{Bonus: From closed to open realizations}\label{sec:open}

In this section we will prove the second part of \Cref{thm:plane}. 
We first show that when we replace sets by their closures in an open realization, new intersection patterns can only arise inside convex regions with empty interior. 

\begin{lemma}\label{lem:empty-interior}
Let $\U = (U_1, \ldots, U_n)$ be a tuple of convex sets in $\R^d$, and define $\X = (X_1, \ldots, X_n)$ where $X_i \eqdef \cl U_i$. 
If $c\in \code(\X)\setminus \code(\U)$, then $\bigcap_{i\in c} X_i$ has empty interior. 
\end{lemma}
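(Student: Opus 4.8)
The plan is to argue by contrapositive: I will assume that $\bigcap_{i\in c} X_i$ has nonempty interior and show that then $c\in\code(\U)$. So suppose there is an interior point $q$ of $\bigcap_{i\in c}X_i$, so that some open ball $B$ around $q$ is contained in every $X_i$ with $i\in c$. The goal is to locate a single point $p$ in $\R^d$ whose intersection pattern with respect to $\U$ is exactly $c$; this requires $p\in U_i$ for all $i\in c$ and $p\notin U_i$ for all $i\notin c$.

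First I would handle membership in the sets indexed by $c$. Since $c\in\code(\X)$, pick a witness point $x$ with $\ipat{\X}{x}=c$; in particular $x\in X_i=\cl U_i$ for each $i\in c$. For each such $i$, because $U_i$ is convex with $B\subseteq \cl U_i$, the ball $B$ actually lies in $\cl(U_i)$ and a standard convexity fact gives $\interior(\cl U_i)=\interior(U_i)$ whenever $U_i$ is nonempty; more directly, for a convex set the relative interior of its closure equals its relative interior, so any point of $\interior(\cl U_i)$ lies in $U_i$ itself. Thus $B\subseteq U_i$ for every $i\in c$. (If some $U_i$ with $i\in c$ were empty then $X_i=\emptyset$, contradicting $i\in c$, so this case does not arise.)

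Next I would arrange non-membership in the remaining sets. For each $j\notin c$ we have $x\notin X_j=\cl U_j$, so $x\notin \cl U_j$ and there is a closed halfspace $H_j$ containing $\cl U_j$ with $x\notin H_j$; equivalently, a small enough ball around $x$ misses $\cl U_j$. Now I want to choose $p$ near $x$ but pushed slightly into $B$: concretely, take $p$ on the segment from $x$ toward $q$, close enough to $x$ that $p\notin \cl U_j$ for all $j\notin c$ (finitely many conditions, each an open condition satisfied at $x$), yet — here is the one subtlety — such that $p\in B$, hence $p\in U_i$ for all $i\in c$. The issue is that $x$ need not be close to $q$, so moving a \emph{small} distance from $x$ toward $q$ may not reach $B$. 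To fix this, I would instead note that the whole open segment $(x,q)$ lies in each $X_i$, $i\in c$ (convexity), and on that segment the points sufficiently close to $x$ avoid every $\cl U_j$ with $j\notin c$; so pick such a point $p\in(x,q)$, which then satisfies $p\in X_i=\cl U_i$ for $i\in c$. That only gives $p\in\cl U_i$, not $p\in U_i$. The cleanest repair: choose $p$ to additionally lie in the open ball $B'\subseteq B$ of points whose distance to $q$ is less than half the radius — impossible if $x$ is far — OR, better, argue directly that the point $p$ we selected on $(x,q)$ near $x$ can be taken to also be an interior point of $\bigcap_{i\in c}\cl U_i$ (the interior is convex and contains both $q$ and a neighborhood, and the segment from an interior point $q$ to any point $x$ of a convex set has all points except possibly $x$ in the interior); hence $p\in\interior(\cl U_i)=\interior U_i\subseteq U_i$ for each $i\in c$. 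Combining, $\ipat{\U}{p}=c$, so $c\in\code(\U)$, contradiction.

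The main obstacle, as flagged, is reconciling the two pulls on $p$: it must stay close enough to the witness $x$ to avoid the sets outside $c$, while simultaneously lying in the interior of $\bigcap_{i\in c}\cl U_i$ so that it genuinely belongs to each $U_i$ (not merely its closure). The resolution is the elementary convexity lemma that the segment joining an interior point of a convex set to \emph{any} point of the set lies, except at the far endpoint, entirely in the interior; applied with $q$ interior and $x$ arbitrary in $\bigcap_{i\in c}\cl U_i$, every point of $(x,q)$ is interior, and among these the ones near $x$ also escape all $\cl U_j$, $j\notin c$. I expect the write-up to spend most of its words on this convexity bookkeeping and on the standard fact that a convex set and its closure have the same interior (so a point in $\interior\cl U_i$ lies in $U_i$).
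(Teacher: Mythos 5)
Your final argument is correct and is essentially the paper's proof: take the witness $x$ with pattern $c$ and an interior point $q$ of $\bigcap_{i\in c}\cl U_i$, and choose a point on the open segment between them close enough to $x$ to miss the closed sets $\cl U_j$, $j\notin c$; such a point lies in $\interior\bigl(\bigcap_{i\in c}\cl U_i\bigr)\subseteq\interior(\cl U_i)\subseteq U_i$ for all $i\in c$, so its pattern under $\U$ is exactly $c$. The detours in your write-up (the ball-around-$q$ attempt and its "cleanest repair") are unnecessary, but the resolution you settle on --- the segment-into-interior lemma plus $\interior(\cl U_i)\subseteq U_i$ for convex $U_i$ --- is exactly what the paper uses.
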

\begin{proof}
Suppose for contradiction that there exists $c\in \code(\X)\setminus \code(\U)$ so that $U\eqdef \interior \left(\bigcap_{i\in c} X_i\right)$ is nonempty.
Let $p\in U$, and let $q$ be a point with $\ipat{\X}{q} = c$.
Since $q\in \bigcap_{i\in c} X_i$, the line segment $pq$ is contained in $U$ except possibly for the point~$q$. 
Since all $X_i$ are closed, $q$ lies a positive distance from $X_i$ for all $i\notin c$. 
Hence we may choose $q' \in U$ on the segment $pq$ so that $\ipat{\X}{q'} = c$.
Since $U_i$ is convex, $\interior X_i\subseteq U_i$.
Hence, $q'\in \interior X_i \subseteq U_i$ for all $i\in c$, and so $\ipat{\U}{q'} = c$.
This contradicts the assumption that $c\notin \code(\U)$.
\end{proof}

\paragraph{Reduction from open to closed realizations.} 
Let $\U = (U_1, \ldots, U_n)$ be a tuple of bounded convex open sets in $\R^2$.
Let $\X = (X_1, \ldots, X_n)$ be the tuple of compact convex sets in $\R^2$ with $X_i \eqdef \cl U_i$ for all $i\in[n]$.
For every nonempty $\sigma\subseteq [n]$ for which $\bigcap_{i\in\sigma} X_i$ is nonempty but has empty interior, let $L_\sigma$ be a line containing the set $\bigcap_{i\in\sigma} X_i$.
Fix a set of representatives $P$ for $\X$ with the following properties:
\begin{enumerate}
\item \label{red:p} for every $c\in \code(\U)$, the set $P$ contains the vertices of a triangle contained in $\bigcap_{i\in c} U_i$ whose interior contains a point $p$ with $\ipat{\U}{p} = c$, 
\item \label{red:u} for every pair $(L_\sigma, X_i)$ with $L_\sigma \cap U_i\neq \emptyset$, the set $P$ contains the vertices of a quadrilateral contained in $X_i$ which has one of its diagonals equal to $L_\sigma \cap X_i$. 
\end{enumerate}
To satisfy the conditions \ref{red:p} and \ref{red:u}, first select, for each $c\in \code(\U)$, a point $p$ such that $\ipat{\U}{p}=c$, and then choose
an appropriate small triangle or quadrilateral containing~$p$. This way we may guarantee 
\begin{align*}
  |P| &\le \abs{\code(\X)}+3\abs{\code(\U)}+4n2^n \\&\le 2^n+3\cdot 2^n + 4n2^n = 4(n+1)2^{n}.
\end{align*}

\parshape=13 0cm.82\hsize 0cm.82\hsize 0cm.82\hsize 0cm.82\hsize 0cm.82\hsize 0cm.82\hsize 0cm.82\hsize 0cm.82\hsize 0cm.82\hsize 0cm.82\hsize 0cm.82\hsize 0cm\hsize 0cm \hsize
Now let $\X' = (X_1', \ldots, X_n')$ be an inclusion-minimal realization of $\code(\X)$ with respect to the set of representatives $P$, as in \Cref{lem:inclusion-minimal}.
By \Cref{cor:part(i)} the number of vertices among all $X_i'$ is at most $3^n (n-1)! |P| \le 4(n+1) \cdot 6^n (n-1)!$. 
Define $\U' = (U_1', \ldots, U_n')$ to be the tuple with $U_i'\eqdef \interior X_i'$ for all $i\in[n]$.
We claim that $\code(\U') = \code(\U)$, which will establish the second part of \Cref{thm:plane}. 
The relationships between the tuples $\U,\X,\X'$ and $\U'$ is depicted on the right.
\vadjust{\hfill\smash{\raise 70pt\llap{\xymatrix@C=3.3em{%
\X\ar[r]^{\operatorname{incl. min.}}&\X'\ar[d]^{\interior}\\
\U\ar[u]^{\cl}&\U'%
}}}}

\subparagraph{The inclusion \texorpdfstring{$\code(\U')\subseteq \code(\U)$}{"U' is subset of U''}:}
Note that the condition \ref{red:u} implies that 
\begin{equation}\label{same_intersection}
  L_\sigma \cap U_i' = L_\sigma\cap U_i\text{ for every }i\in[n].
\end{equation}
Indeed, let $a,b$ be the end points of the line segment $L_\sigma\cap X_i$.
Let $c,d$ be the other two vertices of the quadrilateral from the condition \ref{red:u}.
Suppose first that $p$ is an interior point of the line segment $L_\sigma\cap X_i$.
Then $p\in \interior \conv \{a,b,c,d\}$. Since $a,b,c,d\in X_i\cap P$, it follows
that $a,b,c,d\in X_i'\cap P$ and hence $p\in \interior \conv \{a,b,c,d\}\subseteq U_i'$.
Suppose next that $p$ is either $a$ or $b$.
In this case, since $U_i$ is open it follows that $p\notin U_i$.
Also, $p\in \partial X_i'$ and so $p\notin U_i'$.
So, neither $a$ nor $b$ is in $U_i'$ implying that the intersection of the line
$L_\sigma$ with $U_i'$ is the open line segment $(a,b)$. Because
$L_\sigma\cap U_i=(a,b)$, this proves~\eqref{same_intersection}.\smallskip

From \eqref{same_intersection} it follows that $\ipat{\U'}{p} = \ipat{\U}{p}$ if~$p\in L_{\sigma}$. 
Let $L=\bigcup_{\sigma} L_{\sigma}$ be the union of all $L_{\sigma}$'s. It remains to show that $\ipat{\U'}{p} \in \code(\U)$ for $p\notin L$.
So, fix a point $p\notin L$, and put $c\eqdef \ipat{\X'}{p}$.
Since any intersection pattern in $\code(\X)\setminus \code(\U)$ arise only inside $L$ by \Cref{lem:empty-interior},
it follows any intersection patterns in $\code(\X')\setminus \code(\U)$ also arise only inside $L$.
As $p\notin L$, this implies that $c\in \code(\U)$. 
So by the choice of $P$ in the condition \ref{red:p} the set $\bigcap_{i\in c} X_i'$ has nonempty interior. 
Let $q$ be a point in the interior of $\bigcap_{i\in c} X_i'$.
For sufficiently small $\varepsilon > 0$, consider the point $p_\varepsilon \eqdef p+ \varepsilon(p-q)$.
Since $q\in \interior X_i'$ for every $i\in c$, it follows that $p_{\varepsilon}\in X_i'$ if
and only if $p_{\varepsilon}\in \interior X_i'$, for small enough~$\varepsilon$.
As $\interior X_i'=U_i'$, we conclude that $\ipat{\U'}{p}=\ipat{\X'}{p_\varepsilon}$.
Furthermore, since $p\notin L$ and $\varepsilon>0$ is small, we also
have $p_\varepsilon\notin L$. Again using the fact that the intersection patterns in $\code(\X')\setminus \code(\U)$ arise only inside~$L$, we have  $\ipat{\U'}{p} = \ipat{\X'}{p_\varepsilon} \in \code(\U)$,
as promised.

\subparagraph{The inclusion \texorpdfstring{$\code(\U)\subseteq \code(\U')$}{"U is a subset of U' "}:}
Let $c\in \code(\U)$. Consider the point $p$ and the triangle $\bigtriangleup$ in $\bigcap_{i\in c} U_i$ from the condition~\ref{red:p}.
Since $P$ contains the vertices of $\bigtriangleup$, the same is true of the $X_i'$ for $i\in c$.
Because $p\in \interior \bigtriangleup$ it follows that $p\in U_i'$ for all $i\in c$.
As $U_i'\subseteq U_i$ for all $i\in [n]$, we also have $p\notin U_i'$ for all $i\notin c$.
Therefore, $\ipat{\U'}{p} = \ipat{\U}{p}$, and so $c\in \code(\U')$.

\paragraph{Open \texorpdfstring{$2$-realizability}{2-realizability} is decidable.}
This is similar to the decidability of closed $2$-realizability. There are two minor differences.
First, the number of polygon vertices is bounded by  $4(n+\nobreak 1)\cdot 6^n(n-1)!$ instead of $6^n(n-1)!$.
Second, the formula $\Psi_i$ should use strict inequalities:
\[
\Psi_i(\vec{a}, \vec{b}, \vec{c}, x, y) \eqdef \forall j\in[N]\, (a_{i,j} x + b_{i,j} y < c_{i,j}). 
\]
The rest of the argument is the same. 

\section{Epilogue: comments about the general case}
In this paper we restricted our attention to realizations by open or closed bounded sets in the plane. Extending our
techniques to arbitrary convex sets in the plane does not seem to pose serious challenges, but would make the argument more technical.
However, the situation in higher dimensions appears more difficult, perhaps fundamentally so.

For example, there exist convex sets in $\R^3$ that are not polyhedral at a neighborhood of a point, but
which cannot be made locally polyhedral using the simpification process in \Cref{def:simplification} centered at the point.
Namely, consider the set $C$, which is the convex hull of
\[
\{(x,y,z) \mid z=0, x^2+ y^2= 1\}\cup \{(x,y,z) \mid y=0, x^2+z^2 = 1\}.
\]
The set $C$ is the convex hull of two circles which pass through the point $p=(1, 0, 0)$ transversely (see \Cref{fig:bad-boundary-point}). 
If $B$ is a small ball centered at $p$, then $\cone_p(\partial B \cap C)$ is not even a convex set, no matter how small $B$ is. 
One could attempt to remedy this by replacing $B$ with a neighborhood of $p$ that is shaped differently than a ball, but the fact that we need a simplification process that works simultaneously for many sets introduces further difficulties. 
Even if one does obtain polytopal realizations in higher dimensions, a computable bound on the number of vertices is not immediate, as our \Cref{lem:goodvertices} does not have an obvious generalization beyond the plane. 

\begin{figure}
\[
\includegraphics{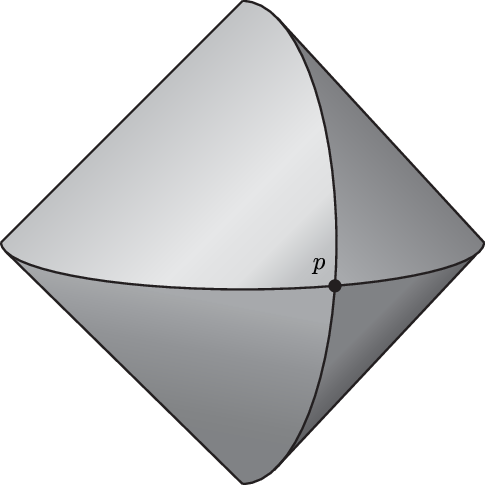}
\]
\caption{A convex set in $\R^3$ that is difficult to locally simplify.}
\label{fig:bad-boundary-point}
\end{figure}

These difficulties suggest that recognizing $3$-realizable convex codes is harder than recognizing $2$\nobreakdash-realizable convex codes.
The results of Kunin, Lienkaemper and Rosen~\cite{KLR} suggest that recognizing $2$\nobreakdash-realizable convex codes is already at least NP-hard. In addition,
Tancer~\cite[Section 6]{NPhard} showed that recognizing $d$\nobreakdash-representable complexes (for any fixed $d\geq 2$) is
NP-hard. 
It is entirely possible that recognizing $3$\nobreakdash-realizable convex codes is in fact an undecidable problem.

\bibliographystyle{plain}
\bibliography{convexcodes2d}
\end{document}